\documentclass[11pt, reqno]{amsart}
\usepackage{amssymb}
\usepackage{amsmath}
\usepackage{amscd}
\usepackage{amssymb}
\usepackage{eucal}
\usepackage{amsmath}
\usepackage{amscd}
\usepackage[dvips]{color}
\usepackage{multicol}
\usepackage[all]{xy}           
\usepackage{graphicx}
\usepackage{color}
\usepackage{colordvi}
\usepackage{xspace}
\usepackage{tikz}

\usepackage{ifpdf}
\ifpdf
 \usepackage[colorlinks,final,backref=page,hyperindex]{hyperref}
\else
 \usepackage[colorlinks,final,backref=page,hyperindex,hypertex]{hyperref}
\fi

\newcommand{\delete}[1]{}

\allowdisplaybreaks

\newtheorem{theorem}{Theorem}[section]
\newtheorem{prop}[theorem]{Proposition}
\newtheorem{defn}[theorem]{Definition}
\newtheorem{lem}[theorem]{Lemma}
\newtheorem{coro}[theorem]{Corollary}

\newtheorem{thm}[theorem]{Theorem}

\newtheorem{rem}[theorem]{Remark}
\newtheorem{exam}[theorem]{Example}

\newcommand{\pair}[1]{\ensuremath{\left\langle #1 \right\rangle}}


\DeclareMathOperator{\ad}{ad}

\DeclareMathOperator{\Hom}{Hom}

\DeclareMathOperator{\End}{End}

\DeclareMathOperator{\id}{id}
\newcommand{\C}{\mathbb{C}}

\newcommand{\F}{\mathbb{F}}
\newcommand{\g}{\mathfrak{g}}

\newcommand{\gl}{\mathfrak{gl}}
\newcommand{\ssl}{\mathfrak{sl}}
\newcommand{\re}{\mathfrak{r}}
\newcommand{\LA}{\Longleftarrow}
\newcommand{\RA}{\Longrightarrow}

\newcommand{\ra}{\longrightarrow}
\newcommand{\A}{\mathcal{A}}
\newcommand{\B}{\mathcal{B}}
\newcommand{\Sol}{\mathcal{Sol}}

\newcommand{\OO}{\mathcal{O}}

\newcommand{\wt}{\widetilde}

\newcommand{\hbo}{$\hfill\Diamond$}

\topmargin -.8cm \textheight 22.8cm \oddsidemargin 0cm
\evensidemargin -0cm \textwidth 16.4cm
\begin{document}
\title{Skew-symmetric solutions of the classical Yang-Baxter equation and $\OO$-operators of Malcev algebras}
\def\shorttitle{Skew-symmetric solutions of the CYBE and $\OO$-operators of Malcev algebras}

\author{Shan Ren}
\address{School of Mathematics and Statistics, Northeast Normal University, Changchun, China}
\email{rens734@nenu.edu.cn}

\author{Runxuan Zhang}
\address{Department of Mathematical and Physical Sciences, Concordia University of Edmonton, Edmonton, Canada; School of Mathematics and Statistics, Northeast Normal University, Changchun, China}
\email{runxuan.zhang@concordia.ab.ca; zhangrx728@nenu.edu.cn}

\thanks{2020 \emph{Mathematics Subject Classification}. 17D10;  15A04; 17B38.}

\begin{abstract}
We study connections between skew-symmetric solutions of the classical Yang-Baxter equation (CYBE)   and  $\OO$-operators of Malcev algebras. We prove that a skew-symmetric solution of the CYBE on a Malcev algebra can be interpreted as an $\OO$-operator associated to the coadjoint representation. We show that this connection can be enhanced with symplectic forms when considering non-degenerate  skew-symmetric solutions. We also show that $\OO$-operators associated to a general representation could give skew-symmetric solutions of the CYBE on certain semi-direct product of Malcev algebras. We  reveal the relationship between invertible $\OO$-operators and compatible pre-Malcev algebra structures on a Malcev algebra. We finally obtain several analogous results on connections between  the CYBE and $\OO$-operators in the case of pre-Malcev algebras.
\end{abstract}

\keywords{Malcev algebra; the classical Yang-Baxter equation; pre-Malcev algebra.}
\maketitle \baselineskip=15pt
\baselineskip=18pt

\section{Introduction}\label{sec1}
\setcounter{equation}{0}
\renewcommand{\theequation}
{1.\arabic{equation}}
\setcounter{theorem}{0}
\renewcommand{\thetheorem}
{1.\arabic{theorem}}

\noindent The classical Yang-Baxter equation (CYBE) on a finite-dimensional nonassociative algebra  of characteristic zero occupies a central place in connecting mathematics and mathematical physics.
The study of the CYBE on a Lie algebra $\g$ has substantial ramifications and applications in the areas of symplectic geometry, quantum groups,  integrable systems, and quantum field theory, whereas characterizing specific solutions  of the CYBE for a given $\g$ is an indispensable and challenging task in terms of the viewpoint of pure mathematics; see for example \cite{BD82, Sto99}. As a natural generalization of Lie algebras, Malcev algebras have been studied extensively since Malcev's work in the 1950s (\cite{Mal55}). Our primary objective is to give a systematic study on skew-symmetric solutions of the CYBE on Malcev algebras, stemming from the point of view of Kupershmidt in \cite[Section 2]{Kup99} that regards solutions of the CYBE as $\OO$-operators.
Our approach exposes some interesting connections between the CYBE, $\OO$-operators, and pre-Malcev algebras.

Let $A$ be a Malcev algebra over a field $\F$ of characteristic zero and  $r=\sum_{i}x_{i}\otimes y_{i}\in A\otimes A$. The equation
\begin{equation}
r_{12}r_{13}+r_{13}r_{23}-r_{23}r_{12}=0
\end{equation} is called the  classical Yang-Baxter equation on $A$, where
$$
r_{12}r_{13}=\sum_{i,j}x_{i}x_{j}\otimes y_{i}\otimes y_j,
r_{13}r_{23}=\sum_{i,j}x_{i}\otimes x_{j}\otimes y_{i}y_{j},
r_{23}r_{12}=\sum_{i,j}x_{j}\otimes x_{i}y_{j}\otimes y_{i}.
$$
Recall that for a vector space $V$, an element $r\in V\otimes V$ is called \textbf{skew-symmetric} if $\sigma(r)=-r$, where $\sigma$ denotes the twist map on $V\otimes V$.
Comparing with $\OO$-operators of Lie algebras and introducing the notion of $\OO$-operators of Malcev algebras, our first main theorem provides a sufficient and necessary condition for a skew-symmetric element $r\in A\otimes A$ being a solution of the
CYBE on $A$. To articulate this result, we write $\Sol(A)$ for the set of all solutions of the CYBE on $A$ and denote by $\OO_A(V,\rho)$ the set of all $\OO$-operators associated to the representation $\rho:A\ra\End(V)$.
For a finite-dimensional vector space $V$ over $\F$, $V^*$ refers to the dual space of $V$ and for $r\in V\otimes V$,
we define $T_r$ to be the linear map from $V^*$ to $V$ by
\begin{equation}
\label{1.1}
\pair{\xi,T_r(\eta)}=\pair{\xi\otimes \eta, r}
\end{equation}
 for all $\xi, \eta\in V^*$, where $\pair{-,-}:V^*\times V\ra\F$ denotes the natural pairing.

\begin{thm}\label{thm1}
Let $A$ be a finite-dimensional Malcev algebra over a field $\F$ of characteristic zero and $r$ be a skew-symmetric element in $A\otimes A$.  Then $r\in \Sol(A)$ if and only if $T_r\in \OO_A(A^*,\ad^*)$, where $(A^*,\ad^*)$ denotes the coadjoint representation of $A$.
\end{thm}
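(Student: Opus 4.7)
The plan is to prove the equivalence by expanding both sides into scalar identities on $A^*\otimes A^*\otimes A^*$ and matching term-by-term, paralleling Kupershmidt's classical Lie-algebra argument. The proof should be a direct calculation: no deep structural input is needed beyond skew-symmetry of $r$ and the fact that $(A^*,\ad^*)$ is already a representation of the Malcev algebra $A$.

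First I will unpack $T_r$. Writing $r=\sum_i x_i\otimes y_i$, the defining relation \eqref{1.1} gives $T_r(\eta)=\sum_i\pair{\eta,y_i} x_i$, and skew-symmetry $\sigma(r)=-r$ yields the alternative expression $T_r(\eta)=-\sum_i\pair{\eta,x_i} y_i$; both forms will be needed to align indices. I will also record the coadjoint formula $\pair{\ad^*(x)\xi,y}=-\pair{\xi,xy}$ for $x,y\in A$ and $\xi\in A^*$. With these in hand, I will pair the $\OO$-operator equation
$$[T_r(\xi),T_r(\eta)]-T_r\bigl(\ad^*(T_r(\xi))\eta\bigr)+T_r\bigl(\ad^*(T_r(\eta))\xi\bigr)=0$$
against an arbitrary $\zeta\in A^*$, reducing it to a sum of triple pairings $\pair{\xi,\cdot}\pair{\eta,\cdot}\pair{\zeta,\cdot}$ indexed by pairs $(i,j)$. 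In parallel, I will apply $\xi\otimes\eta\otimes\zeta$ to the expanded CYBE
$$\sum_{i,j}\bigl(x_ix_j\otimes y_i\otimes y_j+x_i\otimes x_j\otimes y_iy_j-x_j\otimes x_iy_j\otimes y_i\bigr)=0,$$
obtaining a second sum of triple pairings of the same shape.

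The core step is then to match the two scalar identities summand-by-summand. By choosing, for each of the three $\OO$-operator terms, whichever of the two representations of $T_r(\cdot)$ correctly routes $\xi$, $\eta$, $\zeta$ to the three tensor slots, the terms $\pair{\zeta,[T_r(\xi),T_r(\eta)]}$, $\pair{\zeta,T_r(\ad^*(T_r(\xi))\eta)}$, and $\pair{\zeta,T_r(\ad^*(T_r(\eta))\xi)}$ become (up to sign) exactly the contractions of $r_{12}r_{13}$, $r_{13}r_{23}$, and $r_{23}r_{12}$ against $\xi\otimes\eta\otimes\zeta$. Since both identities must hold for all $\xi,\eta,\zeta\in A^*$ and the pairing is nondegenerate, the CYBE (a statement in $A^{\otimes 3}$) is equivalent to the $\OO$-operator relation for $T_r$. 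The main obstacle is purely combinatorial: keeping track of the signs and indices so that the three CYBE summands cancel exactly against the three $\OO$-operator summands after systematically exploiting the skew-symmetry of $r$. Notably, no appeal to the Malcev identity itself is required here, because the equivalence is entirely a matter of multilinear algebra once $(A^*,\ad^*)$ is known to be a representation.
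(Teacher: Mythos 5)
Your proposal is correct and follows essentially the same route as the paper's proof: expand $T_r$ via both formulas forced by skew-symmetry, convert each of the three contractions of the CYBE against $\xi\otimes\eta\otimes\zeta$ and each of the three $\OO$-operator terms into matching triple pairings, and conclude by non-degeneracy of the natural pairing. The only cosmetic difference is which slot you contract the $\OO$-operator identity against ($\zeta$ rather than $\xi$), and your observation that the Malcev identity is not needed beyond knowing $(A^*,\ad^*)$ is a representation is consistent with the paper.
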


More significantly, specializing in non-degenerate skew-symmetric element $r\in A\otimes A$, we could
associate $r$ with a bilinear form $\B_r$ defined by $T_r^{-1}$ and the natural pairing.
Our second major result demonstrates that such $r$ is a solution of the CYBE on $A$ if and only if $\B_r$ is a
symplectic form on $A$. This result provides a possible way to explicitly describe all non-degenerate skew-symmetric solutions in $\Sol(A)$ for some specific Malcev algebras; see Example \ref{exam3.5}. To state this result, we recall that an element  $r\in A\otimes A$ is \textbf{non-degenerate} if $T_r$ defined by Eq. \eqref{1.1} is invertible; a bilinear form $\B$ on $A$ is \textbf{symplectic} if $\B(xy,z)+\B(yz,x)+\B(zx,y)=0$ for all $x,y,z\in A$.
Given a non-degenerate element $r\in A\otimes A$, we define the bilinear form $\B_r$ on $A$  by
$\B_r(x,y):=\pair{T_r^{-1}(x),y}$
for all $x,y\in A$. Lemma \ref{lem3.4} below shows that $r$ is skew-symmetric if and only if $\B_r$ is skew-symmetric.

\begin{thm}\label{thm2}
Let $A$ be a finite-dimensional Malcev algebra over a field $\F$ of characteristic zero  and $r\in A\otimes A$ be skew-symmetric and non-degenerate.  Then $r\in \Sol(A)$ if and only if $\B_r$ is a symplectic form on $A$.
\end{thm}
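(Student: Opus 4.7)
The plan is to invoke Theorem \ref{thm1} and reduce the claim to an equivalence between the $\OO$-operator condition on $T_r$ and the symplectic identity on $\B_r$, exploiting the fact that non-degeneracy makes $T_r \colon A^*\to A$ a linear isomorphism.

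First, I would fix notation: since $r$ is non-degenerate, let $S := T_r^{-1}\colon A \to A^*$, so that by definition $\B_r(x,y) = \langle S(x), y\rangle$ for all $x,y\in A$. By the assumed Lemma \ref{lem3.4}, skew-symmetry of $r$ translates to skew-symmetry of $\B_r$; this will be used freely. By Theorem \ref{thm1}, it suffices to prove that $T_r \in \OO_A(A^*, \ad^*)$ if and only if $\B_r$ is symplectic.

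Next I would unwind the $\OO$-operator condition. Writing any pair $\xi,\eta \in A^*$ as $\xi = S(x)$, $\eta = S(y)$ with $x,y\in A$, the defining identity
\[
T_r(\xi)\, T_r(\eta) \;=\; T_r\bigl(\ad^*(T_r(\xi))\eta - \ad^*(T_r(\eta))\xi\bigr)
\]
becomes, after applying $S$ on both sides,
\[
S(xy) \;=\; \ad^*(x)\,S(y) - \ad^*(y)\,S(x).
\]
Pairing this identity with an arbitrary $z\in A$ and unpacking the coadjoint action $\langle \ad^*(x)\alpha, z\rangle = -\langle \alpha, xz\rangle$ gives
\[
\B_r(xy,z) \;=\; \B_r(x,yz) - \B_r(y,xz).
\]
Conversely, running the same computation backwards shows that this identity, holding for all $x,y,z$, is equivalent to the $\OO$-operator condition for $T_r$, since $S$ is a bijection.

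Finally I would match this with the symplectic identity. Using the skew-symmetry of $\B_r$, the relation just derived reads
\[
\B_r(xy,z) + \B_r(yz,x) + \B_r(zx,y) \;=\; 0,
\]
which is precisely the definition of $\B_r$ being symplectic on $A$. This chain of equivalences gives the theorem. The main subtlety — really the only nontrivial point — is keeping track of signs when transferring from the coadjoint representation on $A^*$ to the bilinear form on $A$; everything else is a direct translation via the pairing, made possible by the invertibility of $T_r$.
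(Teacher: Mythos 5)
Your proposal is correct and follows essentially the same route as the paper: reduce via Theorem \ref{thm1} to the $\OO$-operator condition on $T_r$, translate it through the pairing into $\B_r(xy,z)=\B_r(x,yz)-\B_r(y,xz)$, and then use the skew-symmetry of $\B_r$ (Lemma \ref{lem3.4}) together with anticommutativity of the product to recover the cyclic symplectic identity, reversing the computation for the converse. No gaps.
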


Our third  result provides a construction of a skew-symmetric solution of the CYBE on the semi-direct product Malcev algebra $A\ltimes_{\rho^*}V^*$ by an arbitrary $\OO$-operator associated to a given representation $(V,\rho)$ of a Malcev algebra $A$, which  reveals an inverse procedure of Theorem \ref{thm1} by loosing the restriction of coadjoint representations; compared with the case of Lie algebra (\cite[Section 2]{Bai07}).
Using the tensor-hom adjunction, we  identify $\Hom(V,A)$ with $A\otimes V^*$, and we identify an arbitrary element of
$A\otimes V^*$ with the image in $(A\oplus V^*)\otimes (A\oplus V^*)$ under the tensor product of the standard embeddings $A\ra A\oplus V^*$ and $V^*\ra A\oplus V^*$. Hence, given a linear map $T: V\ra A$, we  define an element $\wt{T}$ (see Eq. (\ref{eq3.1}) below) in $(A\oplus V^*)\otimes (A\oplus V^*)$ via this two identifications. Then we define a skew-symmetric element  $r_T:=\wt{T}-\sigma(\wt{T})$.

\begin{thm}\label{thm3}
Let $(V, \rho)$ be a  representation of a finite-dimensional Malcev algebra  $A$ over a field $\F$ of characteristic zero and $T: V\ra A$ be a  linear map. Then $r_T\in \Sol(A\ltimes_{\rho^*}V^*)$  if and only if $T\in\OO_A(V,\rho)$.
\end{thm}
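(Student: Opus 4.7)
My plan is to deduce this from Theorem \ref{thm1} applied to the semi-direct product Malcev algebra $\wt A := A \ltimes_{\rho^*} V^*$. Since $r_T = \wt T - \sigma(\wt T)$ is skew-symmetric by construction, Theorem \ref{thm1} reduces the statement to showing that $T_{r_T} \in \OO_{\wt A}(\wt A^*, \ad^*_{\wt A})$ if and only if $T \in \OO_A(V, \rho)$, so the real task is to compare these two $\OO$-operator conditions.

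First I would compute $T_{r_T}$ explicitly. Under the natural identification $\wt A^* \cong A^* \oplus V^{**} \cong A^* \oplus V$, and using the defining pairing in Eq.~(\ref{1.1}) together with the fact that $\wt T \in A\otimes V^*$ and $\sigma(\wt T)\in V^*\otimes A$ live in mutually orthogonal summands of the decomposition of $\wt A \otimes \wt A$, I expect to obtain
\[
T_{r_T}(\xi + \eta) \;=\; T(\eta) \;-\; T^{*}(\xi), \qquad \xi\in A^*,\ \eta\in V,
\]
where $T^*: A^* \to V^*$ denotes the transpose of $T$. Thus $T_{r_T}$ restricts to $T$ on $V$ and to $-T^*$ on $A^*$.

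Next I would determine the coadjoint representation of $\wt A$ on $\wt A^*$. For $a+\alpha\in A\oplus V^* = \wt A$ and $\xi+v\in A^*\oplus V=\wt A^*$, using the semi-direct multiplication $(a_1+\alpha_1)(a_2+\alpha_2) = a_1a_2 + \rho^*(a_1)\alpha_2 - \rho^*(a_2)\alpha_1$, I would unwind $\pair{\ad^*_{\wt A}(a+\alpha)(\xi+v),\,b+\beta} = -\pair{\xi+v,\, (a+\alpha)(b+\beta)}$ to read off both the $V$-component and the $A^*$-component of $\ad^*_{\wt A}(a+\alpha)(\xi+v)$. Then I would substitute the formulas for $T_{r_T}$ and $\ad^*_{\wt A}$ into the $\OO$-operator relation
\[
T_{r_T}(U)\cdot T_{r_T}(W) \;=\; T_{r_T}\bigl(\ad^*_{\wt A}(T_{r_T}(U))W - \ad^*_{\wt A}(T_{r_T}(W))U\bigr),
\]
write $U = \xi_1+\eta_1,\, W = \xi_2+\eta_2 \in \wt A^*$, and split both sides according to $\wt A = A \oplus V^*$. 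A case analysis on which of $\xi_i, \eta_i$ are zero shows that every resulting scalar identity reduces, after cancellations invoking the definitions of $T^*$ and $\rho^*$, to
\[
T(\eta)\cdot T(\eta') \;=\; T\bigl(\rho(T\eta)\eta' - \rho(T\eta')\eta\bigr), \qquad \eta,\eta'\in V,
\]
which is precisely the defining condition for $T\in \OO_A(V, \rho)$. The main obstacle will be the bookkeeping: it has to be verified that the four component equations obtained by letting $U, W$ range over $A^*$ and $V$ separately all collapse to this single identity, with no extra constraint surviving. This collapse depends essentially on the identity $\pair{\rho^*(x)\alpha,\,v} = -\pair{\alpha,\,\rho(x)v}$ and on the vanishing of $V^*\cdot V^*$ in $\wt A$, so although each step is elementary, the combined verification must be done carefully.
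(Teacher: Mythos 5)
Your proposal is correct, but it takes a genuinely different route from the paper. The paper proves Theorem \ref{thm3} by brute force: it expands $(r_T)_{12}(r_T)_{13}$, $(r_T)_{13}(r_T)_{23}$ and $(r_T)_{23}(r_T)_{12}$ in the basis $\{v_i\}\cup\{\xi_i\}$, arrives at the identity (Eq. (\ref{eq3.6})) expressing the CYBE defect as a sum of terms of the form $(T(v_i)T(v_j)-T(\rho(T(v_i))v_j)+T(\rho(T(v_j))v_i))\otimes\xi_i\otimes\xi_j$ and its two cyclic companions, and then uses linear independence of the $x_k\otimes\xi_i\otimes\xi_j$ etc.\ to read off the $\OO$-operator condition coefficientwise. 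You instead invoke Theorem \ref{thm1} on the Malcev algebra $\wt{A}=A\ltimes_{\rho^*}V^*$ and reduce everything to comparing $\OO$-operator conditions; this is legitimate since $r_T$ is skew-symmetric and $\wt{A}$ is a finite-dimensional Malcev algebra. Your formula $T_{r_T}(\xi+\eta)=T(\eta)-T^*(\xi)$ is correct (it follows from Lemma \ref{lem2.6}), and the collapse you anticipate does occur: writing $\ad^*_{a+\alpha}(\xi+v)=(\ad^*_a\xi+\mu(\alpha,v))+\rho(a)v$ with $\pair{\mu(\alpha,v),b}=-\pair{\alpha,\rho(b)v}$, the $A$-component of the $\OO$-operator identity for $T_{r_T}$ is literally $T(v_1)T(v_2)=T(\rho(T(v_1))v_2-\rho(T(v_2))v_1)$, while the $V^*$-component, paired against an arbitrary $w\in V$, splits into two instances of the same identity (one in the $\xi_1$-terms, one in the $\xi_2$-terms), so no extra constraint survives. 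What each approach buys: the paper's computation is self-contained and its template is reused almost verbatim for the pre-Malcev analogue (Theorem \ref{thm4.7}), whereas your argument is more conceptual, makes Theorem \ref{thm3} visibly the ``inverse procedure'' of Theorem \ref{thm1} promised in the introduction, and trades triple-tensor bookkeeping for a componentwise analysis of the coadjoint action on $\wt{A}^*\cong A^*\oplus V$. The one place where you must be careful, and which your writeup correctly flags, is the $V^*$-component verification; it is not automatic but does reduce to the same condition.
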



Moreover, invertible elements in $\OO_A(V,\rho)$ have a close relationship with compatible pre-Malcev algebra structures on $A$; see \cite[Section 2]{Mad17} for more details on pre-Malcev algebras. As pre-Lie algebras are Lie-admissible,  pre-Malcev algebras are   Malcev-admissible algebras in the sense of \cite{Myu86}.
Let $\A$ be a pre-Malcev algebra. Then the commutator $xy=x\cdot y-y\cdot x$ for all $x, y \in \A$ defines a Malcev algebra $[\A]$, which is called  the subadjacent Malcev algebra of $\A$, and we call $\A$ a compatible pre-Malcev algebra of $[\A]$. For an element $x\in \A$, the left multiplication operator $L_x: \A \ra \A$ sends  $y\in \A$ to $x\cdot y$. Then the linear map $L: [\A]\ra\End(\A)$ with $x\mapsto L_x$ gives a representation of the Malcev algebra $[\A]$.  Now our fourth theorem can be summarized as follows.

\begin{thm} \label{thm4}
Let $(V,\rho)$ be a representation of a finite-dimensional Malcev algebra  $A$ over a field $\F$ of characteristic zero. For an invertible element $T\in\OO_{A}(V,\rho)$, there
exists a compatible pre-Malcev algebra structure $\A_T$ on $A$ defined by
$x\cdot y:=T(\rho(x)T^{-1}(y))$
for all $x,y\in A$. Conversely, if there exists a compatible pre-Malcev algebra $\A$ on $A$, then
the identity map $\id_A$ belongs to $\OO_A(\A, L)$.
\end{thm}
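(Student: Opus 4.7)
The plan is to handle the two directions separately and to mirror, step by step, the classical pre-Lie/$\OO$-operator argument.

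For the forward direction, I first verify that the proposed operation $x\cdot y := T(\rho(x)T^{-1}(y))$ is compatible with the given Malcev product on $A$, i.e.\ that $x\cdot y - y\cdot x = xy$. Setting $u = T^{-1}(x)$ and $v = T^{-1}(y)$ and invoking the defining $\OO$-operator identity
\[
T(u)T(v) = T\bigl(\rho(T(u))v - \rho(T(v))u\bigr),
\]
one computes
\[
x\cdot y - y\cdot x = T\bigl(\rho(x)T^{-1}(y) - \rho(y)T^{-1}(x)\bigr) = T(u)T(v) = xy,
\]
so once the pre-Malcev axiom is established, $A$ will automatically be the subadjacent Malcev algebra of $\A_T$.

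The main task is thus to verify that $(A,\cdot)$ satisfies the pre-Malcev identity from \cite[Section 2]{Mad17}. My strategy is to use the bijection $T$ to transport the identity from $A$ to $V$: each occurrence of $x\cdot y$ expands to $T(\rho(x)T^{-1}(y))$, and every nested product $x\cdot(y\cdot z)$ or $(x\cdot y)\cdot z$ can be further reduced by the substitution $x=T(u), y=T(v), z=T(w)$, after which the $\OO$-operator relation is used repeatedly to absorb the inner $T$'s. After this rewriting, the pre-Malcev identity becomes an identity in $V$ in the symbols $\rho$, $T$, and the Malcev product of $A$, and it can then be established as a consequence of two ingredients: (i) the fact that $\rho$ is a representation of the Malcev algebra $A$ (so it satisfies the Malcev representation compatibility), and (ii) the $\OO$-operator identity itself, used iteratively to collapse nested expressions of the form $T(\rho(T(\cdot))(\cdot))$. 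The main obstacle here is bookkeeping: the pre-Malcev identity is a degree-four multilinear relation, so one must carefully group terms so that every inner product inside a $T$ takes the form $\rho(T(\cdot))(\cdot) - \rho(T(\cdot))(\cdot)$ amenable to the $\OO$-operator relation, exactly as in Kupershmidt's treatment of the pre-Lie case.

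For the converse, assume $\A = (A,\cdot)$ is a compatible pre-Malcev algebra on $A$, so that the commutator in $\A$ equals the Malcev product of $A$ and $L: A \ra \End(A)$, $L_x y = x\cdot y$, is a representation of the Malcev algebra $A$. The $\OO$-operator condition for $\id_A \in \OO_A(\A, L)$ then reads
\[
\id_A(x)\,\id_A(y) = \id_A\bigl(L(\id_A(x))y - L(\id_A(y))x\bigr),
\]
which collapses to $xy = x\cdot y - y\cdot x$ — precisely the compatibility hypothesis. Hence $\id_A \in \OO_A(\A,L)$, completing the proof.
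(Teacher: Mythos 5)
Your proposal is correct and follows essentially the same route as the paper: the paper packages your ``transport to $V$'' step as Proposition \ref{prop4.3}, which first shows that $v\ast w:=\rho(T(v))w$ makes $V$ a pre-Malcev algebra (by exactly the bookkeeping you describe, using the Malcev representation axiom and the $\OO$-operator identity) and then pushes this structure forward along the invertible $T$; your compatibility check $x\cdot y-y\cdot x=xy$ and the converse computation with $\id_A$ are verbatim the paper's. The one thing you leave as a plan --- the degree-four verification that $P_M=0$ holds for $(V,\ast)$ --- is the actual computational content of the forward direction, so a written-up version would need to carry it out (or invoke the analogue of Proposition \ref{prop4.3}).
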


 We also provide two applications of the existence of compatible pre-Malcev algebra structures on  Malcev algebras  to construct skew-symmetric solutions of the CYBE; see Corollary \ref{coro4.4}.
Our last several results concerned with  the CYBE on pre-Malcev algebras and $\OO$-operators can be regarded as an analogue of the theorems mentioned above. 
Compared with the case of pre-Lie algebras (\cite[Section 2]{Bai10}), these results focus on revealing links between symmetric solutions of the CYBE, $\OO$-operators and bilinear forms on pre-Malcev algebras; see Theorems \ref{thm4.6}, \ref{thm4.7} and Proposition \ref{prop4.8}.

\subsection*{Organization} In Section \ref{sec2}, we present some fundamental results on representations of Malcev algebras and $\OO$-operators, and then we develop two lemmas to prove Theorem \ref{thm1}. Section \ref{sec3} contains the proofs of
Theorems \ref{thm2} and \ref{thm3}, which are both closely related to Theorem \ref{thm1}. Theorem \ref{thm2} specializes in  the case
of Malcev algebras admitting  a symplectic form and Theorem \ref{thm3}  extends $\OO$-operators associated with the coadjoint representation $(A^*,\ad^*)$ to those associated with an arbitrary representation $(V,\rho)$.
In Section \ref{sec4}, we establish connections between invertible $\OO$-operators and compatible pre-Malcev structures on a Malcev algebra. We prove Theorem \ref{thm4} and produce several results about symmetric solutions of the CYBE on pre-Malcev algebras.

Throughout this article we assume that the ground field $\F$ is a field of characteristic zero and all algebras, vector spaces and representations are finite-dimensional over $\F$. The multiplication in a Malcev algebra $A$ is denoted by $xy$ for all $x, y\in A$,  while  the multiplication in a pre-Malcev algebra $\A$ is denoted by $x\cdot y$ for all $x, y\in \A$.


\section{Malcev Algebras and $\OO$-operators}\label{sec2}
\setcounter{equation}{0}
\renewcommand{\theequation}
{2.\arabic{equation}}
\setcounter{theorem}{0}
\renewcommand{\thetheorem}
{2.\arabic{theorem}}

\noindent We recall some fundamental concepts  on representations of Malcev algebras. Comparing with
$\OO$-operators of Lie algebras, we introduce the notion of $\OO$-operators of Malcev algebras and present concrete
examples on  some specific Malcev algebras. We close this section by giving a proof of Theorem \ref{thm1}.

\subsection{Representations of Malcev algebras}

Recall that a nonassociative anti-commutative algebra $A$ over a field $\F$ is called a \textbf{Malcev algebra} provided that
\begin{equation}\tag{Malcev identity}
\label{MI}
(xy)(xz)=((xy)z)x+((yz)x)x+((zx)x)y
\end{equation}
for all $x, y, z \in A$. Compared with the relationship between Lie algebras and Lie groups, Malcev algebras
appear as the tangent spaces of smooth Moufang loops at the identities; see for example \cite{Mal55} for more backgrounds. It was proved in \cite[Proposition 2.21]{Sag61} that Malcev identity is also equivalent to
\begin{equation}\tag{Sagle identity}
\label{SI}
(xz)(yt) = ((xy)z)t + ((yz)t)x + ((zt)x)y + ((tx)y)z
\end{equation}
for all $x, y, z,t \in A$. Note that each Lie algebra is a Malcev algebra, thus all Lie-admissible algebras are Malcev-admissible. Here we have an example of a 4-dimensional non-Lie Malcev algebra.

\begin{exam}\label{exam2.1}
{\rm
Let $A$ be a vector space over $\F$ with a basis $\{e_{1}, e_{2}, e_{3}, e_{4}\}$. A direct calculation verifies that these non-zero products: $e_{1}e_{2}=-e_{2}, e_{1}e_{3}=-e_{3}, e_{1}e_{4}=e_{4}, e_{2}e_{3}=2e_{4}$, give rise to a non-Lie Malcev algebra structure on $A$; see \cite[Section 3]{Sag61}.
\hbo}\end{exam}

Let $A$ be a Malcev algebra over $\F$. A pair $(V,\rho)$ of  a vector space $V$ over $\F$ and a linear map $\rho:A\ra\gl(V)$ is called a \textbf{representation} of $A$ if
\begin{equation}\label{eq2.1}
\rho((xy)z)=\rho(x)\rho(y)\rho(z)-\rho(z)\rho(x)\rho(y)+\rho(y)\rho(zx)-\rho(yz)\rho(x)
\end{equation}
for all $x,y,z\in A$. Note that when $A$ is a Lie algebra, a Malcev representation of $A$  is not necessarily a Lie representation; see for example \cite[Section 3]{Yam63} and \cite{Eld90}. Two representations $(V_1, \rho_1)$ and $(V_2, \rho_2)$ are \textbf{isomorphic} if there exists a linear isomorphism $\varphi:V_2\ra V_1$ such that
$\rho_1(x)\circ \varphi=\varphi\circ\rho_2(x)$ for all $x\in A$.

Given a representation $(V, \rho)$ of $A$, there exists a Malcev algebra structure on
the direct sum $A\oplus V$ of  vector spaces given by
 \begin{equation}
 (x,u)(y,v)=(xy,\rho(x)v-\rho(y)u)
 \end{equation}
for all $x,y \in A$ and $u,v\in V$.  This Malcev algebra is called the
\textbf{semi-direct product} of $A$ and $V$ and  denoted by $A\ltimes_\rho V$. Moreover, consider the dual space $V^*$ of $V$ and a natural pairing $\pair{-,-}: V^*\times V\ra\F$. The \textbf{dual representation} $(V^*,\rho^*)$ of $(V,\rho)$
is defined by $\pair{\rho^*(x)\xi,v}=-\pair{\xi,\rho(x)v}$
for all $x\in A, \xi\in V^*$ and $v\in V$.
See  \cite{Kuz14} for a survey on structures and representations of Malcev algebras.
The following two examples of representations are necessary to us.

\begin{exam}\label{exam2.2}
{\rm Let $A$ be a Malcev algebra over $\F$.
As in the case of Lie algebras, the linear map $\ad:A\ra\End(A)$ sending $x$ to $\ad_x$, where $\ad_x(y)=xy$ for all $y\in A$, together with $A$, forms a representation $(A,\ad)$ of $A$, which is called the \textbf{adjoint representation} of $A$. The corresponding dual representation $(A^*,\ad^*)$ is called the \textbf{coadjoint representation} of $A$.
\hbo}\end{exam}

\subsection{$\OO$-operators of Malcev algebras}
Comparing with $\OO$-operators of Lie algebras \cite[Section 2]{Kup99}, we introduce
the notion of an $\OO$-operator of a Malcev algebra that also generalizes the concept of a Rota-Baxter operator (of weight zero) on a Malcev algebra appeared in \cite[Definition 8]{Mad17}.

\begin{defn}{\rm
Let $A$ be a Malcev algebra over $\F$ and $(V, \rho)$ be a representation of $A$.  A linear map $T: V \ra
A$ is called an \textbf{$\mathcal {O}$-operator} of
$A$ associated to  $(V,\rho)$ if
\begin{equation}
T(v)T(w) = T(\rho(T(v))w-\rho(T(w))v)
\end{equation}
for all $v, w \in V$. As stated previously, we write $\OO_A(V,\rho)$ for the set of all $\OO$-operators of $A$
associated to $(V, \rho)$. In particular, Rota-Baxter operators (of weight 0) of $A$ are nothing but $\OO$-operators
associated to $(A, \ad)$.
\hbo}\end{defn}

\begin{exam}\label{exam2.5}
{\rm Continued with Example \ref{exam2.1}, we consider the coadjoint representation $(A^*,\ad^*)$ of $A$.
Let $\{\varepsilon_1,\varepsilon_2,\varepsilon_3,\varepsilon_4\}$ be the basis of $A^*$ dual to $\{e_1,e_2,e_3,e_4\}$. With respect to  the two bases,
a linear map $T:A^*\ra A$ corresponds to a $4\times 4$-matrix. A direct verification shows that the following matrices
$$\begin{pmatrix}
   0   & 0&0&a   \\
   0   & 0&0&b  \\
    0 & 0&0&c  \\
  -a     & -b&-c&d   \\
\end{pmatrix}, \begin{pmatrix}
   0   & 0&0&0   \\
   0   & 0&0&a  \\
    0 & 0&0&b  \\
  c     & d&e&f   \\
\end{pmatrix}, \begin{pmatrix}
   0   & 0&0&a   \\
   0   & 2a^2/k&a&b  \\
    0 & 2a&k&c  \\
  -a     & -b&-c&d   \\
\end{pmatrix}$$
are $\OO$-operators of $A$ associated to $(A^*,\ad^*)$, where $a,b,c,d,e,f\in \F$ and $k\in\F \setminus\{0\}$.
\hbo}\end{exam}

\begin{exam}\label{exam2.6}
{\rm Consider the 3-dimensional simple Lie algebra $\ssl_2(\C)$ spanned by $\{x,y,z\}$ with nontrivial relations
$[x,y]=2y,[x,z]=-2z$ and $[y,z]=x$, which can also be viewed as a Malcev algebra. Suppose that $V$ is a vector space spanned by $\{u,v\}$.
It was proved in \cite[Section 6]{Car76} that the action of $\ssl_2(\C)$ on $V$ given by
$$xu=-2u, xv=2v,yu=0, yv=-2u,  zu=-2v, zv=0$$
makes $V$ become an irreducible non-Lie Malcev representation of $\ssl_2(\C)$. One can verify that
$$\begin{pmatrix}
     a & 2b & 0 \\
     b & c &0
\end{pmatrix}\textrm{ and  }\begin{pmatrix}
     a & 0 & 0 \\
     b & 0 &-2a
\end{pmatrix}$$
 are $\OO$-operators of $\ssl_2(\C)$ associated to this representation, where $a,b,c\in\C$.
\hbo}\end{exam}

\begin{lem}\label{lem2.6}
Suppose that $V$ is a vector space over $\F$ and $r=\sum_{i}x_{i}\otimes y_{i}\in V\otimes V$, $\xi\in V^*$. Then $T_r(\xi)=\sum_i\pair{\xi,y_i}x_i$. In particular, if $r$ is skew-symmetric, then $T_r(\xi)=-\sum_i\pair{\xi,x_i}y_i$.
\end{lem}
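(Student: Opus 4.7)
The plan is to unpack the defining identity \eqref{1.1} using only the standard identification $(V\otimes V)^{*}\cong V^{*}\otimes V^{*}$ via $\pair{\xi\otimes\eta,\,x\otimes y}=\pair{\xi,x}\pair{\eta,y}$, extended bilinearly. This is a purely formal manipulation, so there is no real obstacle; the whole point of the lemma is to record a concrete coordinate-free formula that subsequent arguments (in particular the proof of Theorem \ref{thm1}) can cite without rederivation.

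Concretely, I would fix $\xi\in V^{*}$ and compute $\pair{\eta, T_{r}(\xi)}$ (or equivalently $\pair{\xi, T_{r}(\eta)}$, depending on which convention of \eqref{1.1} we adopt) for an arbitrary $\eta\in V^{*}$. Substituting $r=\sum_{i}x_{i}\otimes y_{i}$ into the right-hand side of \eqref{1.1} and using bilinearity of the pairing, one gets
\begin{equation*}
\pair{\eta,\,T_{r}(\xi)} \;=\; \pair{\eta\otimes\xi,\,r} \;=\; \sum_{i}\pair{\eta,x_{i}}\pair{\xi,y_{i}} \;=\; \pair{\eta,\,\textstyle\sum_{i}\pair{\xi,y_{i}}\,x_{i}}.
\end{equation*}
Because this holds for every $\eta\in V^{*}$ and the natural pairing is non-degenerate (as $V$ is finite-dimensional), I can cancel $\eta$ to conclude $T_{r}(\xi)=\sum_{i}\pair{\xi,y_{i}}\,x_{i}$, which is the first assertion.

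For the second assertion, I would observe that skew-symmetry $\sigma(r)=-r$ gives the alternative expression $r=-\sum_{i}y_{i}\otimes x_{i}$. Applying the already-established formula to this representation of the same tensor $r$ yields $T_{r}(\xi)=-\sum_{i}\pair{\xi,x_{i}}\,y_{i}$, which is the claimed identity. Since each step reduces to the definition of the natural pairing on a tensor product and the non-degeneracy of $\pair{-,-}$, the proof is a two-line calculation with no substantive difficulty.
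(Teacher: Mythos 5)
Your proposal is correct and follows essentially the same route as the paper: evaluate $\pair{\eta,T_r(\xi)}$ against an arbitrary $\eta\in V^*$, expand the pairing on the tensor product, and invoke non-degeneracy; the skew-symmetric case via $r=-\sigma(r)$ is likewise the paper's argument in a slightly repackaged form. No gaps.
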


\begin{proof}
Given any $\eta\in V^*$, we have $\pair{\eta,T_r(\xi)}=\pair{\eta\otimes\xi,r}=\sum_{i}\pair{\eta\otimes\xi,x_{i}\otimes y_{i}}=
\sum_{i}\pair{\eta,x_{i}}\pair{\xi,y_{i}}=\pair{\eta,\sum_i\pair{\xi,y_i}x_i}$. Thus $\pair{\eta,T_r(\xi)-\sum_i\pair{\xi,y_i}x_i}=0$. As
the natural pairing is non-degenerate, it follows that $T_r(\xi)-\sum_i\pair{\xi,y_i}x_i=0$. For the second statement, recall that
$\sigma(r)=-r$ and we see that $\pair{\eta,T_r(\xi)}=\pair{\eta\otimes \xi,r}=-\pair{\eta\otimes \xi,\sigma(r)}=-\sum_{i}\pair{\eta\otimes \xi,y_{i}\otimes x_{i}}=-\pair{\eta,\sum_i\pair{\xi,x_i}y_i}$. The same reason as before implies that $T_r(\xi)=-\sum_i\pair{\xi,x_i}y_i$, as desired.
\end{proof}

\begin{lem}\label{lem2.7}
Let $V$ be a vector space over $\F$ and $r=\sum_{i}x_{i}\otimes y_{i}\in V\otimes V$. Then $r$ is skew-symmetric
if and only if $\pair{\xi,T_r(\eta)}=-\pair{\eta, T_r(\xi)}$ for all $\xi,\eta\in V^*$.
\end{lem}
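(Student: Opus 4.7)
The plan is to reduce both sides of the equivalence to a statement about the pairing $\pair{-,-}\colon (V^{*}\otimes V^{*})\times(V\otimes V)\to\F$, and then invoke its non-degeneracy.

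First I would simply unfold the definition of $T_{r}$. By Eq.~\eqref{1.1}, $\pair{\xi,T_{r}(\eta)}=\pair{\xi\otimes\eta,r}$ and $\pair{\eta,T_{r}(\xi)}=\pair{\eta\otimes\xi,r}$. The latter equals $\pair{\xi\otimes\eta,\sigma(r)}$; this is a one-line check on simple tensors since $\pair{\eta\otimes\xi,x_{i}\otimes y_{i}}=\pair{\eta,x_{i}}\pair{\xi,y_{i}}=\pair{\xi\otimes\eta,y_{i}\otimes x_{i}}$, and then it extends to all of $V\otimes V$ by bilinearity. Consequently, the condition $\pair{\xi,T_{r}(\eta)}=-\pair{\eta,T_{r}(\xi)}$ translates, for each $\xi,\eta\in V^{*}$, into the single equation
\begin{equation*}
\pair{\xi\otimes\eta,\ r+\sigma(r)}=0.
\end{equation*}

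For the forward implication, if $\sigma(r)=-r$ then $r+\sigma(r)=0$, so the displayed equation holds trivially for every $\xi,\eta$. For the converse, assume the displayed equation holds for all $\xi,\eta\in V^{*}$. Since the simple tensors $\xi\otimes\eta$ span $V^{*}\otimes V^{*}$, this implies $\pair{\omega,r+\sigma(r)}=0$ for every $\omega\in V^{*}\otimes V^{*}$. The natural pairing between $V^{*}\otimes V^{*}$ and $V\otimes V$ is non-degenerate (as $V$ is finite-dimensional), hence $r+\sigma(r)=0$, i.e., $r$ is skew-symmetric.

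There is essentially no obstacle here; the only small point requiring care is the bookkeeping identity $\pair{\eta\otimes\xi,r}=\pair{\xi\otimes\eta,\sigma(r)}$, which is handled by the one-line calculation on simple tensors indicated above. Everything else is a direct application of the defining relation \eqref{1.1} and non-degeneracy of the natural pairing.
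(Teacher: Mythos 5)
Your proof is correct and follows essentially the same route as the paper: both arguments hinge on the identity $\pair{\eta\otimes\xi,r}=\pair{\xi\otimes\eta,\sigma(r)}$ and on the non-degeneracy of the natural pairing between $V^{*}\otimes V^{*}$ and $V\otimes V$. Your version is slightly more streamlined in that it reduces both implications to the single condition $\pair{\xi\otimes\eta,\ r+\sigma(r)}=0$, whereas the paper treats the two directions separately via Lemma \ref{lem2.6}, but the substance is identical.
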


\begin{proof} ($\RA$)
Since $r$ is skew-symmetric, we see that $-\pair{\eta, T_r(\xi)}=-\pair{\eta\otimes \xi,r}=\pair{\eta\otimes \xi,\sigma(r)}=
\sum_{i}\pair{\eta\otimes \xi,y_{i}\otimes x_{i}}=\pair{\xi,\sum_i\pair{\eta,y_i}x_i}=\pair{\xi,T_r(\eta)}$.
The last equation follows from Lemma \ref{lem2.6}.  ($\LA$) Note that
\begin{eqnarray*}
\pair{\eta\otimes \xi, \sigma(r)+r}&=&\pair{\eta\otimes \xi, \sigma(r)}+\pair{\eta\otimes \xi, r}\\
&=&\pair{\xi,T_r(\eta)}+\sum_i\pair{\eta,x_i}\pair{\xi,y_i}\quad (\textrm{by Lemma }\ref{lem2.6})\\
&=&-\pair{\eta, T_r(\xi)}+\sum_i\pair{\eta,x_i}\pair{\xi,y_i} \quad (\textrm{by the assumption})\\
&=&-\sum_i\pair{\xi,y_i}\pair{\eta,x_i}+\sum_i\pair{\eta,x_i}\pair{\xi,y_i}\\
&=&0.
\end{eqnarray*}
As the natural pairing is non-degenerate, $\sigma(r)+r=0$, i.e., $r$ is skew-symmetric.
\end{proof}

We are ready to prove our first theorem.

\begin{proof}[Proof of Theorem \ref{thm1}]
We first assume that $r=\sum_{i}x_{i}\otimes y_{i}\in A\otimes A$ is skew-symmetric.
Note that $(A^{\otimes 3})^*= (A^*)^{\otimes 3}$ when $A$ is finite dimensional. For arbitrarily chosen $\xi,\eta, \zeta\in A^*$,
 we consider the natural pairing on $A^{\otimes 3}$ and see that
\begin{eqnarray*}
\pair{\xi\otimes\eta\otimes\zeta, r_{12}r_{13}} & = &  \sum_{i,j}\pair{\xi\otimes\eta\otimes\zeta,x_{i}x_j\otimes y_{i}\otimes y_j}\\
 & = & \sum_{i,j}\pair{\xi,x_{i}x_j}\pair{\eta,y_{i}}\pair{\zeta,y_j}\\
  & = & \sum_{i,j}\pair{\xi,\left(\pair{\eta,y_{i}}x_{i}\right)\left(\pair{\zeta,y_j}x_j\right)}.
\end{eqnarray*}
On the other hand, it follows from Lemma \ref{lem2.6} that $\pair{\xi,T_r(\eta)T_r(\zeta)}= \sum_{i, j}\pair{\xi,\left(\pair{\eta,y_i}x_i\right)\left(\pair{\zeta,y_j}x_j\right)}$. Thus $\pair{\xi\otimes\eta\otimes\zeta, r_{12}r_{13}}=\pair{\xi,T_r(\eta)T_r(\zeta)}$.
Similarly, we have $\pair{\xi\otimes\eta\otimes\zeta, r_{13}r_{23}}=\pair{\zeta,T_r(\xi)T_r(\eta)}$ and
$\pair{\xi\otimes\eta\otimes\zeta, r_{23}r_{12}}=\pair{\eta,T_r(\xi)T_r(\zeta)}$. Hence,
\begin{equation}
\label{eq2.4}
\pair{\xi\otimes\eta\otimes\zeta, r_{12}r_{13}+r_{13}r_{23}-r_{23}r_{12}}=\pair{\xi,T_r(\eta)T_r(\zeta)}+\pair{\zeta,T_r(\xi)T_r(\eta)}-\pair{\eta,T_r(\xi)T_r(\zeta)}.
\end{equation}
This key equation involves the CYBE. To establish links between this equation and $\OO$-operators, we consider the
coadjoint representation $(A^*,\ad^*)$ of $A$ and note that
\begin{eqnarray*}
\pair{\xi, T_r(\ad^*_{T_r(\eta)}(\zeta))}&=& -\pair{\ad^*_{T_r(\eta)}(\zeta), T_r(\xi)}\quad \textrm{(by Lemma \ref{lem2.7})}\\
&=& \pair{\zeta, \ad_{T_r(\eta)}(T_r(\xi))}\\
&=&\pair{\zeta, T_r(\eta)T_r(\xi)}
= -\pair{\zeta, T_r(\xi)T_r(\eta)}.
\end{eqnarray*}
Similarly, $\pair{\xi, T_r(\ad^*_{T_r(\zeta)}(\eta))}= -\pair{\eta, T_r(\xi)T_r(\zeta)}.$ Hence,
\begin{eqnarray}\label{eq2.5}
&&\pair{\xi,T_r(\eta)T_r(\zeta)-T_r(\ad^*_{T_r(\eta)}(\zeta))+T_r(\ad^*_{T_r(\zeta)}(\eta))}\nonumber\\
&=&\pair{\xi, T_r(\eta)T_r(\zeta)}+\pair{\zeta, T_r(\xi)T_r(\eta)}-\pair{\eta, T_r(\xi)T_r(\zeta)}\\
&=& \pair{\xi\otimes \eta\otimes\zeta, r_{12}r_{13}+r_{13}r_{23}-r_{23}r_{12}}.\nonumber
\end{eqnarray}
Here the last equation follows from Eq. (\ref{eq2.4}).

Now we are in a position to complete the proof. In fact, if $r\in\Sol(A)$, then $r_{12}r_{13}+r_{13}r_{23}-r_{23}r_{12}=0$. Thus it follows from Eq. (\ref{eq2.5}) that $\pair{\xi,T_r(\eta)T_r(\zeta)-T_r(\ad^*_{T_r(\eta)}(\zeta))+T_r(\ad^*_{T_r(\zeta)}(\eta))}=0$. Since $\xi$ is arbitrary, we see that $T_r(\eta)T_r(\zeta)-T_r(\ad^*_{T_r(\eta)}(\zeta))+T_r(\ad^*_{T_r(\zeta)}(\eta))=0$ for all $\eta,\zeta\in A^*$, i.e.,
$T_r\in\OO_A(A^*,\ad^*)$. Conversely, assume that $T_r\in\OO_A(A^*,\ad^*)$. By Eq. (\ref{eq2.5}), we see that
$\pair{\xi\otimes \eta\otimes \zeta, r_{12}r_{13}+r_{13}r_{23}-r_{23}r_{12}}=0$. Therefore, we have
$r_{12}r_{13}+r_{13}r_{23}-r_{23}r_{12}=0$, showing that $r$ is a solution of the CYBE on $A$.
\end{proof}

\section{Bilinear Forms, the CYBE and Semi-direct Products}\label{sec3}
\setcounter{equation}{0}
\renewcommand{\theequation}
{3.\arabic{equation}}
\setcounter{theorem}{0}
\renewcommand{\thetheorem}
{3.\arabic{theorem}}

\noindent Specializing in Malcev algebras admitting non-degenerate invariant bilinear forms, we
establish an analogue of Theorem \ref{thm1} in which $\OO$-operators could be replaced by  Rota-Baxter operators of weight zero; see Corollary \ref{coro3.3}. We also give detailed proofs of Theorems \ref{thm2} and \ref{thm3}.
Throughout this section we let $A$ be a Malcev algebra over $\F$.

\subsection{Invariant bilinear forms}  A bilinear form $\B:A\times A\ra \F$ is called \textbf{invariant}  if  $\B(xy,z)=\B(x,yz)$  for all $x,y,z\in A$. 

\begin{prop}\label{prop3.1}
Let $A$ be a Malcev algebra over $\F$. Then the adjoint representation $(A,\ad)$ and the coadjoint representation $(A^*,\ad^{*})$ of $A$ are isomorphic if and only if $A$ admits a non-degenerate invariant bilinear form.
\end{prop}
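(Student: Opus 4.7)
The plan is to package the statement as an equivalence between two avatars of the same data, namely a linear isomorphism $\psi:A\to A^*$ satisfying an intertwining relation. Either a bilinear form $\B$ or an isomorphism of representations $\varphi:A^*\to A$ produces such a $\psi$ (with $\psi=\varphi^{-1}$ in the latter case), and non-degeneracy of $\B$ matches bijectivity of $\psi$ while invariance of $\B$ matches the intertwining property.

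For $(\Leftarrow)$, starting from a non-degenerate invariant bilinear form $\B$, I would define $\psi:A\to A^*$ by $\pair{\psi(x),y}=\B(x,y)$, observe that non-degeneracy forces $\psi$ to be a linear isomorphism, and set $\varphi:=\psi^{-1}$. Using the definition of representation isomorphism in the paper, verifying that $\varphi$ is an isomorphism of $(A^*,\ad^*)$ with $(A,\ad)$ reduces, after applying $\psi$ to both sides, to the identity $\psi(xy)=\ad^{*}(x)(\psi(y))$ for all $x,y\in A$. Pairing with an arbitrary $z\in A$ and unfolding the definitions of $\ad^*$ and $\psi$, this becomes $\B(xy,z)=-\B(y,xz)$, which I would derive from invariance $\B(yx,z)=\B(y,xz)$ combined with the anti-commutativity $xy=-yx$ of the Malcev product.

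For $(\Rightarrow)$, given an isomorphism $\varphi:A^*\to A$ of representations, I would set $\psi:=\varphi^{-1}$ and define $\B(x,y):=\pair{\psi(x),y}$. Non-degeneracy of $\B$ is immediate from the bijectivity of $\psi$ and the non-degeneracy of the natural pairing. The intertwining condition $\psi\circ\ad(x)=\ad^*(x)\circ\psi$, after pairing with $z$, yields $\B(xy,z)=-\B(y,xz)$, which together with anti-commutativity of the Malcev product rearranges into the invariance $\B(xy,z)=\B(x,yz)$.

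I do not expect any genuine obstacle: no deep feature of Malcev algebras (in particular, not the Malcev identity itself) enters the argument, which relies only on anti-commutativity, exactly as in the analogous statement for Lie algebras. The only point that requires care is bookkeeping of signs between the anti-commutativity of the product, the minus sign in the definition of $\ad^*$, and the two natural pairings involved.
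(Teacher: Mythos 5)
Your proposal is correct and follows essentially the same route as the paper: the bilinear form and the intertwining linear isomorphism $A\to A^*$ are two packagings of the same data, with non-degeneracy matching bijectivity and invariance matching the intertwining relation via anti-commutativity and the sign in the definition of $\ad^*$ (the Malcev identity is indeed never used). The only cosmetic difference is that you orient the isomorphism as $A^*\to A$ and invert it, while the paper works directly with $\varphi:A\to A^*$.
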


\begin{proof}
$(\RA)$ Suppose $\varphi:A\ra A^*$ is a linear isomorphism such that $\ad^*_x\circ \varphi=\varphi\circ\ad_x$ for arbitrary $x\in A$. Thus $\varphi(\ad_x(y))=\ad^*_x(\varphi(y))$ for all $y\in A$. We define a bilinear form
$\B_\varphi:A\times A\ra\F$ by $(x,y)\mapsto \pair{\varphi(x),y}$. To see that $\B_\varphi$ is invariant, we take $z\in A$, then $\B_\varphi(xy,z)=-\B_\varphi(yx,z)=-\B_\varphi(\ad_y(x),z)=-\pair{\varphi(\ad_y(x)),z}=
-\pair{\ad_y^*(\varphi(x)),z}=\pair{\varphi(x),\ad_y(z)}=\B_\varphi(x,yz)$, which means $\B_\varphi$ is invariant.
As $\varphi$ is bijective and the natural pairing on $A$ is non-degenerate, it follows that $\B_\varphi$ is also non-degenerate.

$(\LA)$ Assume that there exists a non-degenerate invariant bilinear form $\B$ on $A$. We define
\begin{equation}
\label{eq3.0}
\varphi_\B:A\ra A^*\textrm{ by }x\mapsto \B_x,
\end{equation}
where $\B_x(y):=\B(x,y)$ for all $y\in A$.
We first note that $\varphi_\B$
is linear as $\B$ is bilinear. To see that $\varphi_\B$ is bijective, assume that $x_1,x_2\in A$ are two elements such that
$\B_{x_1}=\B_{x_2}$. Then $\B(x_1,y)=\B(x_2,y)$ for all $y\in A$, i.e., $\B(x_1-x_2,y)=0$. As $\B$ is non-degenerate, we have $x_1=x_2$. Thus $\varphi_\B$ is injective. This fact, together with $\dim(A)=\dim(A^*)$, implies that $\varphi_\B$ is surjective. Hence, $\varphi_\B$ is a linear isomorphism. Moreover, we  choose a natural pairing on $A$ such that $\pair{\B_x,y}=\B(x,y)$ for all $x,y\in A$ as $\B$ is non-degenerate.  Since $\B$ is invariant, for all $x,y,z\in A$, we see that
$\pair{\varphi_\B(\ad_x(y))-\ad^*_x(\varphi_\B(y)),z}=\pair{\B_{xy},z}-\pair{\ad^*_x(\B_y),z}=
 \pair{\B_{xy},z}+\pair{\B_y,\ad_x(z)}= \pair{\B_{xy},z}+\pair{\B_y,xz}=\B(xy,z)+\B(y,xz)=-\B(yx,z)+\B(y,xz)=-\B(y,xz)+\B(y,xz)=0$. Hence, $\varphi_\B(\ad_x(y))=\ad^*_x(\varphi_\B(y))$ for all $y\in A$, i.e.,
$\varphi_\B\circ \ad_x=\ad^*_x\circ\varphi_\B$.
Therefore, $\varphi_\B$ is an isomorphism between $(A,\ad)$ and  $(A^*,\ad^{*})$.
\end{proof}

\begin{prop}\label{prop3.2}
Let $\varphi:(V_1,\rho_1)\ra(V_2,\rho_2)$ be an isomorphism of two representations of a Malcev algebra $A$ over $\F$.
Then for each  $T\in \OO_A(V_2,\rho_2)$,  the composition  $T \circ\varphi \in \OO_A(V_1,\rho_1)$. In particular, there is a one-to-one correspondence between $\OO_A(V_1,\rho_1)$ and $\OO_A(V_2,\rho_2)$ when the representations $(V_1,\rho_1)$ and $(V_2,\rho_2)$ are  isomorphic.
\end{prop}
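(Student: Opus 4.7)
The plan is to verify the $\OO$-operator identity for $S := T \circ \varphi$ by a direct computation, exploiting the intertwining property of the isomorphism $\varphi$. Reading the notation $\varphi:(V_1,\rho_1)\ra(V_2,\rho_2)$ as a linear isomorphism $\varphi: V_1 \ra V_2$ satisfying $\rho_2(x) \circ \varphi = \varphi \circ \rho_1(x)$ for all $x \in A$, I would start from $S(u)S(w) = T(\varphi(u))\,T(\varphi(w))$ for arbitrary $u, w \in V_1$ and apply the defining identity for $T \in \OO_A(V_2,\rho_2)$ to rewrite this as
\[
T\bigl(\rho_2(T(\varphi(u)))\,\varphi(w) - \rho_2(T(\varphi(w)))\,\varphi(u)\bigr).
\]

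The key step is then to push $\rho_2$ through $\varphi$ using the intertwining relation with $x = S(u)$ and $x = S(w)$, converting the expression into
\[
T\varphi\bigl(\rho_1(S(u))w - \rho_1(S(w))u\bigr) = S\bigl(\rho_1(S(u))w - \rho_1(S(w))u\bigr),
\]
which is exactly the $\OO$-operator identity for $S$ with respect to $(V_1,\rho_1)$. This gives the first assertion. For the bijection, observe that $\varphi^{-1}: V_2 \ra V_1$ is automatically an isomorphism of representations in the reverse direction (apply $\varphi^{-1}$ on both sides of the intertwining relation), so the first part applied with the roles of $(V_1,\rho_1)$ and $(V_2,\rho_2)$ swapped produces a map $S \mapsto S \circ \varphi^{-1}$ from $\OO_A(V_1,\rho_1)$ to $\OO_A(V_2,\rho_2)$; at the level of underlying functions, this is visibly a two-sided inverse of $T \mapsto T \circ \varphi$.

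I do not anticipate any substantive obstacle: the whole content is a single algebraic verification that reduces to one use of the intertwining relation. The only point that warrants care is the book-keeping around the direction of $\varphi$, since the convention used in the proposition is the opposite of the one fixed in the definition of isomorphic representations given earlier; once the direction is pinned down, the remainder of the argument is mechanical.
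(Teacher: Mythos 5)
Your proposal is correct and is essentially the paper's own argument: both verify the $\OO$-operator identity for $T\circ\varphi$ by a single application of the intertwining relation (the paper writes the same chain of equalities starting from the other end), and both obtain the bijection by applying the first part to $\varphi^{-1}$ and noting that $T\mapsto T\circ\varphi$ and $S\mapsto S\circ\varphi^{-1}$ are mutually inverse. Your remark about the direction of $\varphi$ is well taken — the paper's proof indeed uses $\varphi\circ\rho_1(x)=\rho_2(x)\circ\varphi$ with $\varphi\colon V_1\ra V_2$, matching your reading.
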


\begin{proof}
Suppose $T\in \OO_A(V_2,\rho_2)$ and $v,w\in V_1$ are arbitrary elements. Since $\varphi$ is an isomorphism of representations, we have
\begin{eqnarray*}
&&(T\circ\varphi)(\rho_1((T\circ\varphi)(v))w-\rho_1((T\circ\varphi)(w))v)\\
&=&T(\varphi(\rho_1(T(\varphi(v)))w)-\varphi (\rho_1(T(\varphi(w)))v))\\
&=&T(\rho_2(T(\varphi(v)))\varphi(w)-\rho_2(T(\varphi(w)))\varphi(v))\\
&=&T(\varphi(v))T(\varphi(w))=(T\circ\varphi)(v)(T\circ\varphi)(w),
\end{eqnarray*}
which implies that $T\circ\varphi\in \OO_A(V_1,\rho_1)$. Similarly, for each $S\in \OO_A(V_1,\rho_1)$, one can show that
$S\circ\varphi^{-1}\in \OO_A(V_2,\rho_2)$. We define a map $\Phi: \OO_A(V_1,\rho_1)\ra \OO_A(V_2,\rho_2)$ by
sending each $S$ to $S\circ\varphi^{-1}$ and another map $\Psi: \OO_A(V_2,\rho_2)\ra \OO_A(V_1,\rho_1)$ by sending every $T$ to $T\circ\varphi$. Clearly, $\Psi\circ\Phi=1_{\OO_A(V_1,\rho_1)}$ and $\Phi\circ\Psi=1_{\OO_A(V_2,\rho_2)}$.
Thus  $\Psi$ is a bijection and the proof is completed.
\end{proof}

Together with Theorem \ref{thm1}, Propositions \ref{prop3.1} and \ref{prop3.2}, imply the following result.

\begin{coro}\label{coro3.3}
Let $A$ be a Malcev algebra admitting a non-degenerate invariant bilinear form $\B$ and $r$ be a skew-symmetric element in $A\otimes A$.  Then $r\in \Sol(A)$ if and only if $T_r\circ\varphi_\B$ is a Rota-Baxter operator of weight zero on $A$, where $T_r$ and $\varphi_\B$ are defined as in Eqs. \eqref{1.1} and  \eqref{eq3.0}  respectively. 
\end{coro}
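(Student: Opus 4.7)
The plan is to prove Corollary \ref{coro3.3} by chaining Theorem \ref{thm1}, Proposition \ref{prop3.1}, and Proposition \ref{prop3.2} into a short sequence of equivalences; there is essentially no new content to verify, only an assembly of pieces already in hand.

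First, since $r\in A\otimes A$ is skew-symmetric, Theorem \ref{thm1} immediately says that $r\in \Sol(A)$ if and only if $T_r\in \OO_A(A^*,\ad^*)$. This converts the CYBE condition into an $\OO$-operator condition on the coadjoint representation.

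Next, I would invoke Proposition \ref{prop3.1}. The hypothesis that $A$ admits a non-degenerate invariant bilinear form $\B$ produces exactly the linear isomorphism $\varphi_\B:A\ra A^*$ given by Eq. \eqref{eq3.0}, and the ($\LA$) half of the proof of that proposition shows that $\varphi_\B$ intertwines $\ad$ and $\ad^*$, so it is an isomorphism of representations from $(A,\ad)$ to $(A^*,\ad^*)$.

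Then I apply Proposition \ref{prop3.2} with $(V_1,\rho_1)=(A,\ad)$, $(V_2,\rho_2)=(A^*,\ad^*)$, and $\varphi=\varphi_\B$, which yields a one-to-one correspondence $T\leftrightarrow T\circ\varphi_\B$ between $\OO_A(A^*,\ad^*)$ and $\OO_A(A,\ad)$. In particular, $T_r\in \OO_A(A^*,\ad^*)$ if and only if $T_r\circ\varphi_\B\in \OO_A(A,\ad)$. Since an $\OO$-operator associated to the adjoint representation is, by the definition recalled after Example \ref{exam2.2}, precisely a Rota-Baxter operator of weight zero on $A$, this closes the chain of equivalences. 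The only point that requires any care is bookkeeping of domains and codomains: $T_r:A^*\ra A$ and $\varphi_\B:A\ra A^*$, so $T_r\circ\varphi_\B:A\ra A$ has the correct shape for a Rota-Baxter operator; getting the order of composition right is the one place where a reader could slip. I do not expect a genuine obstacle, as the corollary is a formal consequence of the three results already at our disposal.
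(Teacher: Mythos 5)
Your proposal is correct and is exactly the argument the paper intends: the corollary is stated as a direct consequence of Theorem \ref{thm1}, Proposition \ref{prop3.1} (whose $(\LA)$ direction supplies the intertwiner $\varphi_\B$), and Proposition \ref{prop3.2} applied with $(V_1,\rho_1)=(A,\ad)$, $(V_2,\rho_2)=(A^*,\ad^*)$, and $\varphi=\varphi_\B$, together with the observation that $\OO_A(A,\ad)$ consists precisely of the Rota-Baxter operators of weight zero. Your bookkeeping of the composition $T_r\circ\varphi_\B:A\ra A$ is also consistent with the conventions in Proposition \ref{prop3.2}.
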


\subsection{Symplectic forms} A non-degenerate skew-symmetric bilinear form $\B:A\times A\ra \F$ is said to be \textbf{symplectic} if $\B(xy,z)+\B(yz, x)+\B(zx, y)=0$ for all $x,y,z\in A$. 

\begin{lem}\label{lem3.4}
Let $V$ be a vector space over $\F$ and $r\in V\otimes V$ be non-degenerate. Then $r$ is skew-symmetric if and only if the bilinear form $\B_r:V\times V\ra\F$ defined by $(x, y)\mapsto \pair{T_r^{-1}(x),y}$ is skew-symmetric, where $T_r$ is defined as in Eq. \eqref{1.1}.
\end{lem}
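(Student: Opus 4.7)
The plan is to leverage Lemma \ref{lem2.7}, which characterizes skew-symmetry of $r$ via the map $T_r$ as the condition $\pair{\xi,T_r(\eta)}=-\pair{\eta,T_r(\xi)}$ for all $\xi,\eta\in V^*$, and then use the invertibility of $T_r$ to transport this identity from $V^*\times V^*$ to $V\times V$.

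First, since $r$ is non-degenerate, $T_r:V^*\to V$ is a linear isomorphism. So every pair $(x,y)\in V\times V$ can be written uniquely as $x=T_r(\xi)$, $y=T_r(\eta)$ with $\xi=T_r^{-1}(x)$ and $\eta=T_r^{-1}(y)$ in $V^*$. Substituting into the definition of $\B_r$ gives
\begin{equation*}
\B_r(x,y)=\pair{T_r^{-1}(x),y}=\pair{\xi,T_r(\eta)},\qquad \B_r(y,x)=\pair{T_r^{-1}(y),x}=\pair{\eta,T_r(\xi)}.
\end{equation*}
Adding these, $\B_r(x,y)+\B_r(y,x)=\pair{\xi,T_r(\eta)}+\pair{\eta,T_r(\xi)}$.

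For the forward direction, if $r$ is skew-symmetric, Lemma \ref{lem2.7} makes the right-hand side vanish, so $\B_r(x,y)=-\B_r(y,x)$ for all $x,y\in V$. For the converse, assume $\B_r$ is skew-symmetric. Given arbitrary $\xi,\eta\in V^*$, set $x:=T_r(\xi)$ and $y:=T_r(\eta)$; then the displayed equations yield $\pair{\xi,T_r(\eta)}+\pair{\eta,T_r(\xi)}=\B_r(x,y)+\B_r(y,x)=0$, which is precisely the criterion of Lemma \ref{lem2.7}, hence $r$ is skew-symmetric.

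There is no real obstacle here: the statement is essentially a dictionary between the pairing-description of skew-symmetry in Lemma \ref{lem2.7} and the bilinear form $\B_r$, and the bijectivity of $T_r$ makes the correspondence between universally-quantified statements over $V^*\times V^*$ and over $V\times V$ automatic. The only point worth being careful about is to use $T_r^{-1}$ (rather than $T_r$) correctly when translating variables, which is handled by the substitution $\xi=T_r^{-1}(x)$, $\eta=T_r^{-1}(y)$.
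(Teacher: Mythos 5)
Your proof is correct and follows essentially the same route as the paper's: both directions are reduced to the criterion of Lemma \ref{lem2.7} by using the bijectivity of $T_r$ to pass between $\xi,\eta\in V^*$ and $x=T_r(\xi)$, $y=T_r(\eta)$ in $V$. No gaps.
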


\begin{proof}
Note that $r$ is non-degenerate, thus for any $x,y\in V$, there exist unique $\xi,\eta\in V^*$ such that $x=T_r(\xi)$ and $y=T_r(\eta)$.
Now we assume that $r$ is skew-symmetric. Then $\B_r(x,y)+\B_r(y,x)=\pair{T_r^{-1}(x),y}+\pair{T_r^{-1}(y),x}=
\pair{\xi,T_r(\eta)}+\pair{\eta,T_r(\xi)}=0$, where the last equality follows from Lemma \ref{lem2.7}. Hence, $\B_r$ is skew-symmetric. Conversely, by Lemma \ref{lem2.7}, it suffices to show that $\pair{\xi,T_r(\eta)}+\pair{\eta, T_r(\xi)}=0$ for all $\xi,\eta\in V^*$. In fact, $\pair{\xi,T_r(\eta)}+\pair{\eta,T_r(\xi)}=\pair{T_r^{-1}(T_r(\xi)),T_r(\eta)}+\pair{T_r^{-1}(T_r(\eta)),T_r(\xi)}=
\B_r(T_r(\xi),T_r(\eta))+\B_r(T_r(\eta),T_r(\xi))=\B_r(x,y)+\B_r(y,x)=0$, since $\B_r$ is skew-symmetric.
\end{proof}

\begin{proof}[Proof of Theorem \ref{thm2}]
Suppose that $x,y,z\in A$ are arbitrary elements. Note that $r$ is non-degenerate, thus there exist unique
$\xi,\eta\in A^*$ such that $x=T_r(\xi)$ and $y=T_r(\eta)$.

($\RA$) As $r\in\Sol(A)$ is skew-symmetric, it follows from Theorem
\ref{thm1} that
\begin{eqnarray*}
\B_r(xy,z)&=&\pair{T_r^{-1}(xy),z}=\pair{T_r^{-1}(T_r(\xi)T_r(\eta)),z}\\
&=&\pair{T_r^{-1}(T_r(\ad^*_{T_r(\xi)}(\eta)-\ad^*_{T_r(\eta)}(\xi)),z} \\
&=&\pair{\ad^*_{T_r(\xi)}(\eta)-\ad^*_{T_r(\eta)}(\xi),z}\\
&=&\pair{\ad^*_{T_r(\xi)}(\eta),z}-\pair{\ad^*_{T_r(\eta)}(\xi),z}.
\end{eqnarray*}
However, $\pair{\ad^*_{T_r(\xi)}(\eta),z}=-\pair{\eta,\ad_{T_r(\xi)}(z)}=-\pair{\eta,T_r(\xi)z}=-\pair{T_r^{-1}(y),xz}=-\B_r(y,xz)$. Similarly,
$\pair{\ad^*_{T_r(\eta)}(\xi),z}=-\B_r(x,yz)$. Thus $\B_r(xy,z)=\B_r(x,yz)-\B_r(y,xz)=\B_r(x,yz)+\B_r(y,zx)$.
Lemma \ref{lem3.4} asserts that $\B_r$ is skew-symmetric. Hence
$\B_r(xy,z)+\B_r(yz,x)+\B_r(zx,y)=0$, that is, $\B_r$ is a symplectic form.

($\LA$) Now we assume that $\B_r$ is a symplectic form. By Theorem
\ref{thm1}, it suffices to show that $T_r\in \OO_A(A^*,\ad^*)$.
We have seen from the previous proof that
$\B_r(xy,z)=\pair{T_r^{-1}(xy),z}=\pair{T_r^{-1}(T_r(\xi)T_r(\eta)),z}$,
$\B_r(yz, x)=-\B_r(x,yz)=\pair{\ad^*_{T_r(\eta)}(\xi),z}$
and $\B_r(zx,y)=-\B_r(xz,y)=\B_r(y,xz)=-\pair{\ad^*_{T_r(\xi)}(\eta),z}$.
Thus
\begin{eqnarray*}
0&=&\B_r(xy,z)+\B_r(yz, x)+\B_r(zx, y)\\
&=&\pair{T_r^{-1}(T_r(\xi)T_r(\eta)),z}+\pair{\ad^*_{T_r(\eta)}(\xi),z}-\pair{\ad^*_{T_r(\xi)}(\eta),z}\\
&=&\pair{T_r^{-1}(T_r(\xi)T_r(\eta))+\ad^*_{T_r(\eta)}(\xi)-\ad^*_{T_r(\xi)}(\eta),z}
\end{eqnarray*}
for all $z\in A$.
As the natural pairing on $A$ is non-degenerate, it follows that $T_r^{-1}(T_r(\xi)T_r(\eta))+\ad^*_{T_r(\eta)}(\xi)-\ad^*_{T_r(\xi)}(\eta)=0$,
i.e., $T_r^{-1}(T_r(\xi)T_r(\eta))=\ad^*_{T_r(\xi)}(\eta)-\ad^*_{T_r(\eta)}(\xi)$. Multiplying the two sides of this equation with $T_r$, we see that $T_r(\xi)T_r(\eta)=T_r(\ad^*_{T_r(\xi)}(\eta)-\ad^*_{T_r(\eta)}(\xi))$.
This means that $T_r\in \OO_A(A^*,\ad^*)$, and therefore, $r\in \Sol(A)$, as desired.
\end{proof}

\begin{exam}\label{exam3.5}
{\rm
Let $A$ be the 4-dimensional Malcev algebra  with the basis $\{e_1,\dots,e_4\}$ defined in Example \ref{exam2.1}.
With respect to this basis, a direct calculation shows that any symplectic form $\B$ on $A$ has  the following form
$$\begin{pmatrix}
   0   & a&b&c   \\
   -a   & 0&-c&d  \\
    -b & c&0&e  \\
  -c     & -d&-e&0   \\
\end{pmatrix},$$
where $a,b,c,d,e\in\F$ and $c^2-ae\pm bd\neq0.$ By Theorem \ref{thm2}, one can construct all non-degenerate skew-symmetric solutions of the CYBE on $A$.
For instance, if we set $c=1$ and $a=b=d=e=0$, then  $r=e_1\otimes e_4-e_4\otimes e_1-e_2\otimes e_3+e_3\otimes e_2$ is   a non-degenerate skew-symmetric solution.
\hbo}\end{exam}

\subsection{The CYBE and semi-direct products}
To give a proof of Theorem \ref{thm3}, we use notations appeared in Introduction.
The identification of $A\otimes V^*$ with $\Hom(V,A)$ can be realized via the  linear isomorphism $\tau$ defined by
sending $x\otimes \xi$ to $\tau_{x\otimes \xi}$, where $x\in A, \xi\in V^*$ and $\tau_{x\otimes \xi}(v):=\xi(v)x$ for all $v\in V$.
Let  $\{v_1,\dots,v_n\}$ be a basis  of $V$ and  $\{\xi_1,\dots,\xi_n\}$ be the dual basis of $V^*$. Then we can identify an element $T\in\Hom(V,A)$ with the element
\begin{equation}
\label{eq3.1}
\wt{T}:=\sum_{i=1}^n T(v_i)\otimes \xi_i\in A\otimes V^*\subseteq (A\oplus V^*)\otimes (A\oplus V^*).
\end{equation}
We  define
\begin{equation}
\label{eq3.2}
r_T:=\wt{T}-\sigma(\wt{T})=\sum_{i=1}^n (T(v_i)\otimes \xi_i-\xi_i\otimes T(v_i))\in (A\oplus V^*)\otimes (A\oplus V^*).
\end{equation}

To analyze equivalent conditions of $r_T$ being a solution of the CYBE on $A\ltimes_{\rho^*}V^*$, we first note that
\begin{eqnarray}\label{eq3.3}
(r_T)_{12}(r_T)_{13}&=&\sum_{i,j=1}^n(T(v_i)T(v_j)\otimes \xi_i\otimes \xi_j-\rho^{*}(T(v_i))\xi_j\otimes \xi_i\otimes T(v_j)\\
&&\;\;\;\;\;\;\;\;+\rho^{*}(T(v_j))\xi_i\otimes T(v_i)\otimes \xi_j)\nonumber.
\end{eqnarray}
As $\rho^{*}(T(v_i))\xi_j\in V^*$ for all $i,j\in\{1,\dots,n\}$, we  assume that $\rho^{*}(T(v_i))\xi_j=a_1(ij) \xi_1+\dots+a_n(ij) \xi_n$, where
$a_s(ij)\in\F.$ For any $k\in\{1,\dots,n\}$, we have $a_k(ij)  =  \pair{\sum_{s=1}^n a_s(ij)\xi_s,v_k} = \pair{\rho^*(T(v_i))\xi_j,v_k}=-\pair{\xi_j,\rho(T(v_i))v_k}.$ Thus
\begin{equation}
\label{eq3.4}
\rho^*(T(v_i))\xi_j=-\sum_{k=1}^n\pair{\xi_j,\rho(T(v_i))v_k}\xi_k.
\end{equation}
Similarly, we observe that
\begin{equation}
\label{eq3.5}
\rho(T(v_i))v_j=\sum_{k=1}^n\pair{\xi_k,\rho(T(v_i))v_j}v_k.
\end{equation} Hence, it follows from Eqs. (\ref{eq3.4}) and (\ref{eq3.5}) that
\begin{eqnarray*}
\sum_{i,j=1}^n\rho^*(T(v_i))\xi_j\otimes \xi_i\otimes T(v_j)
&=&\sum_{i,j=1}^n(-\sum_{k=1}^n\pair{\xi_j,\rho(T(v_i))v_k}\xi_k)\otimes \xi_i\otimes T(v_j)\\
&=&-\sum_{i,k=1}^n \xi_k\otimes \xi_i\otimes T(\sum_{j=1}^n\pair{\xi_j,\rho(T(v_i))v_k}v_j)\\
&=&-\sum_{i,j=1}^n \xi_j\otimes \xi_i\otimes T(\sum_{k=1}^n\pair{\xi_k,\rho(T(v_i))v_j}v_k)\\
&=&-\sum_{i,j=1}^n \xi_j\otimes \xi_i\otimes T(\rho(T(v_i))v_j).
\end{eqnarray*}
Further, a similar calculation shows that
\begin{eqnarray*}
\sum_{i,j=1}^n\rho^{*}(T(v_j))\xi_i\otimes T(v_i)\otimes \xi_j 
=-\sum_{i,j=1}^n\xi_i\otimes T(\rho(T(v_j))v_i)\otimes \xi_j.
\end{eqnarray*}
Taking the previous two equations back to Eq. (\ref{eq3.3}), we see that
\begin{eqnarray*}
(r_T)_{12} (r_T)_{13}=\sum_{i,j=1}^n((T(v_i)T(v_j)\otimes \xi_i\otimes \xi_j+\xi_j\otimes \xi_i \otimes T(\rho(T(v_i))v_j)-\xi_i\otimes T(\rho(T(v_j))v_i)\otimes \xi_j).
\end{eqnarray*}
We proceed in this way on $(r_T)_{13}(r_T)_{23}$ and $(r_T)_{23}(r_T)_{12}$ and eventually  derive
\begin{eqnarray}\label{eq3.6}
&&(r_T)_{12}(r_T)_{13}+(r_T)_{13}(r_T)_{23}-(r_T)_{23}(r_T)_{12}\nonumber\\
&=&\sum_{i,j=1}^n (T(v_i)T(v_j)-T(\rho(T(v_i))v_j)+T(\rho(T(v_j))v_i))\otimes \xi_i\otimes \xi_j\\
&&+\sum_{i,j=1}^n \xi_i\otimes(T(v_j)T(v_i)-T(\rho(T(v_j))v_i)+T(\rho(T(v_i))v_j))\otimes \xi_j\nonumber\\
&&+\sum_{i,j=1}^n \xi_i\otimes \xi_j\otimes(T(v_i)T(v_j)-T(\rho(T(v_i))v_j)+T(\rho(T(v_j))v_i)).\nonumber
\end{eqnarray}

Now we are ready to give a proof to our third main theorem.

\begin{proof}[Proof of Theorem \ref{thm3}] ($\RA$) Assume that $T\in\OO_A(V,\rho)$ is an $\OO$-operator. Then
$T(v_i)T(v_j)-T(\rho(T(v_i))v_j)+T(\rho(T(v_j))v_i)=0$ for all $i,j\in\{1,\dots,n\}$. Thus the right-hand side of Eq. (\ref{eq3.6}) is zero. This implies that $(r_T)_{12}(r_T)_{13}+(r_T)_{13}(r_T)_{23}-(r_T)_{23}(r_T)_{12}=0$, i.e., $r_T$ is a solution of the CYBE on $A\ltimes_{\rho^*}V^*$.

($\LA$)
Suppose that $r_T\in \Sol(A\ltimes_{\rho^*}V^*)$ is a solution, that is $(r_T)_{12}(r_T)_{13}+(r_T)_{13}(r_T)_{23}-(r_T)_{23}(r_T)_{12}=0$. Thus the right-hand side of Eq. (\ref{eq3.6}) is equal to zero. Let  $\{x_1,\dots,x_m\}$ be a basis of $A$. Assume that $T(v_i)T(v_j)-T((\rho(T(v_i))v_j-\rho(T(v_j))v_i)=c_1(ij)x_1+\dots+c_m(ij)x_m$ for some $c_1(ij),\dots, c_m(ij)\in\F$. Hence,
$$0=\sum_{i,j=1}^n \sum_{k=1}^m c_k(ij)(x_k\otimes \xi_i\otimes \xi_j+\xi_j\otimes x_k\otimes \xi_i+\xi_i\otimes \xi_j\otimes x_k).$$
Since $\{x_k\otimes \xi_i\otimes \xi_j, \xi_j\otimes x_k\otimes \xi_i, \xi_i\otimes \xi_j\otimes x_k\mid 1\leqslant i,j\leqslant n,1\leqslant k\leqslant m\}$ is a
subset of a basis of $(A\ltimes_{\rho^*}V^*)^{\otimes 3}$, its elements are linearly independent over $\F$. Hence,
$c_k(ij)=0$ for all $k, i$ and $j$, which means that
\begin{eqnarray*}
T(v_i)T(v_j)=T(\rho(T(v_i))v_j-\rho(T(v_j))v_i),
\end{eqnarray*}
for all $i,j\in\{1,\dots,n\}$. Therefore, $T\in\OO_A(V,\rho)$ and the proof is completed.
\end{proof}

\begin{exam}\label{exam3.6}
{\rm We consider the 4-dimensional Malcev algebra $A$ with the basis $\{e_1,\dots, e_4\}$ and the skew-symmetric solution $r=e_1\otimes e_4-e_4\otimes e_1-e_2\otimes e_3+e_3\otimes e_2$ of the CYBE on $A$ described  in Example \ref{exam3.5}.  By Theorem \ref{thm1},  we see that the linear map $T: A^* \ra A$ defined by
\begin{equation}\label{eq3.8}
T(\varepsilon_1)=-e_4, T(\varepsilon_2)=e_3, T(\varepsilon_3)=-e_2, T(\varepsilon_4)=e_1
\end{equation}
is an $\OO$-operator of $A$ associated to the coadjoint representation $(A^*,\ad^*)$, where $\{\varepsilon_1,\varepsilon_2,\varepsilon_3,\varepsilon_4\}$ is the dual basis of $A^*$.

Let $(A^*)^*$ be the dual space of $A^*$ with a basis $\{x_1,\dots, x_4\}$. We identify $(A^*)^*$ with $A$ and thus $\{x_1,\dots, x_4\}$ could be viewed as another basis of $A$. Take the $\OO$-operator $T$ in Theorem \ref{thm3} as in Eq. (\ref{eq3.8}).
We note that all non-zero products in Malcev algebra $A\ltimes_{(\ad^*)^*}(A^*)^*=A\ltimes_{\ad}A$ are given by
\begin{eqnarray*}
e_1e_2=-e_2, e_1e_3=-e_3, e_1e_4=e_4, e_2e_3=2e_4, e_1x_2=-x_2, e_1x_3=-x_3, \\
e_1x_4=x_4, e_2x_3=2x_4, e_2x_1=x_2, e_3x_1=x_3, e_4x_1=-x_4, e_3x_2=-2x_4.
\end{eqnarray*}
 It follows from Theorem \ref{thm3} that $r_T=-e_4\otimes x_1+x_1\otimes e_4+e_3\otimes x_2-x_2\otimes e_3-e_2\otimes x_3+x_3\otimes e_2+e_1\otimes x_4-x_4\otimes e_1$ is a skew-symmetric solution of CYBE on $A\ltimes_{\ad}A$.
\hbo}\end{exam}

\section{$\OO$-operators and the CYBE on Pre-Malcev Algebras}\label{sec4}
\setcounter{equation}{0}
\renewcommand{\theequation}
{4.\arabic{equation}}
\setcounter{theorem}{0}
\renewcommand{\thetheorem}
{4.\arabic{theorem}}

\noindent After recalling basic facts on bimodules of pre-Malcev algebras, we study connections between $\OO$-operators and compatible pre-Malcev structures on a Malcev algebra, giving a proof of Theorem \ref{thm4} with two applications. Comparing with Theorems \ref{thm1} and  \ref{thm3}, we also derive several analogous results on symmetric solutions of the CYBE on pre-Malcev algebras.

\subsection{$\OO$-operators of Malcev algebras and Pre-Malcev algebras}
Recall in \cite[Definition 4]{Mad17} that a pre-Malcev algebra $\A$ is a vector space over $\F$ endowed with a binary product $\cdot$  satisfying an identity $P_M(x,y,z,t)=0$, where
\begin{eqnarray}\label{eq4.1}
&&P_M(x,y,z,t)\nonumber\\
&=&(y\cdot z)\cdot(x\cdot t)-(z\cdot y)\cdot(x\cdot t)+((x\cdot y)\cdot z)\cdot t-((y\cdot x)\cdot z)\cdot t+(z\cdot(y\cdot x))\cdot t\nonumber\\
&&-(z\cdot(x\cdot y))\cdot t+y\cdot((x\cdot z)\cdot t)-y\cdot((z\cdot x)\cdot t)+z\cdot(x\cdot (y\cdot t))-x\cdot(y\cdot (z\cdot t))\qquad
\end{eqnarray}
for all $x,y,z,t\in\A$. As Malcev-admissible algebras, pre-Malcev algebras extend the notion of pre-Lie algebras (or left-symmetric algebras) which have been studied extensively; see for example \cite{Bai04,Man11,Seg92}.


\begin{exam}\label{exam4.1}
{\rm
Let $A$ be the 4-dimensional Malcev algebra  appeared in Example \ref{exam2.1} with the basis $\{e_{1}, e_{2}, e_{3}, e_{4}\}$. A direct calculation verifies that the following non-zero noncommutative products:
$$e_{1}\cdot e_{2}=-e_{2},  e_{1}\cdot e_{3}=-e_{3},  e_{1}\cdot e_{4}=e_{4},  e_{2}\cdot e_{3}=2e_{4},$$ give rise to a compatible pre-Malcev algebra structure $\A$ on $A$. In other words, $[\A]=A$.
\hbo}\end{exam}

Let $(\A,\cdot)$ be a pre-Malcev algebra over $\F$. A triple $(V,\ell,\re)$ of a vector space $V$ over $\F$ and two linear maps $\ell, \re:\A\ra \End(V) (x\mapsto \ell_x, x\mapsto \re_x)$  is called a \textbf{bimodule} of $\A$ if the following four equations hold:
\begin{eqnarray}
\re_x \re_y \re_z-\re_x \re_y\ell_z-\re_x\ell_y \re_z+\re_x\ell_y\ell_z-\re_{z\cdot(y\cdot x)}+\ell_y \re_{z\cdot x}+\ell_{z\cdot y}\re_x-\ell_{y\cdot z}\re_x-\ell_z \re_x\ell_y+\ell_z \re_x \re_y&=& 0,\qquad\label{eq4.2}\\
\re_{x}\re_{y}\ell_{z}-\re_{x}\re_{y}\re_{z}-\re_{x}\ell_{y}\ell_{z}+\re_{x}\ell_{y}\re_{z}-\ell_{z}\re_{y\cdot x}+\ell_{y}\ell_{z}\re_{x}+\re_{z\cdot x}\re_{y}
-\re_{z\cdot x}\ell_{y}-\re_{(y\cdot z)\cdot x}+\re_{(z\cdot y)\cdot x}&=& 0,\qquad\label{eq4.3}\\
\re_{x}\ell_{y\cdot z}-\re_{x}\ell_{z\cdot y}-\re_{x}\re_{y\cdot z}+\re_{x}\re_{z\cdot y}-\ell_{y}\ell_{z}\re_{x}+\re_{y\cdot(z\cdot x)}+\re_{y\cdot x}\ell_{z}
-\re_{y\cdot x}\re_{z}-\ell_{z}\re_{x}\re_{y}+\ell_{z}\re_{x}\ell_{y}&=& 0,\qquad\label{eq4.4}\\
\ell_{(x\cdot y)\cdot z}-\ell_{(y\cdot x)\cdot z}-\ell_{z\cdot(x\cdot y)}+\ell_{z\cdot(y\cdot x)}-\ell_{x}\ell_{y}\ell_{z}+\ell_{z}\ell_{x}\ell_{y}
+\ell_{y\cdot z}\ell_{x}-\ell_{z\cdot y}\ell_{x}-\ell_{y}\ell_{z\cdot x}+\ell_{y}\ell_{x\cdot z}&=& 0,\qquad\label{eq4.5}
\end{eqnarray}
where $x,y,z\in \A$. 
Equivalently,
a triple $(V,\ell,\re)$ is an $\A$-bimodule if and only if the direct sum $\A\oplus V$ of vector spaces is turned into a pre-Malcev algebra, called the \textbf{semi-direct product} of $\A$ and $V$ via $(\ell, \re)$, by defining the binary product on $\A\oplus V$ as
\begin{equation}\label{eq4.6}
(x,u)\cdot(y,v):=(x\cdot y,\ell_x(v)+\re_y(u))
\end{equation}
for all $x,y\in\A$ and $u,v\in V$. We denote this pre-Malcev algebra by $\A\ltimes_{\ell,\re}V$.

Suppose that $\ell$ and $\re$ are two linear maps from $\A$ to $\End(V)$. Consider the dual space $V^*$ of $V$ and $\End(V^*)$. We define two linear maps $\ell^*,\re^*: \A\ra \End(V^*)$ by
\begin{equation}\label{eq4.7}
\pair{\ell^{*}_x(\xi),v}:=-\pair{\xi,\ell_x(v)}, \pair{\re^{*}_x(\xi),v}:=-\pair{\xi,\re_x(v)},
\end{equation}
respectively, where $x\in\A,\xi\in V^*$ and $v\in V$. Moreover, if $(V,\ell,\re)$ is an $\A$-bimodule, then one can show via a direct check that $(V^*,\ell^*-\re^*,-\re^*)$ is also an $\A$-bimodule.

Let $(\A, \cdot)$ be a pre-Malcev algebra. For  elements $x, y\in \A$, the left multiplication operator $L_x$ is  defined in the Introduction and we also define the right multiplication operator $R_x(y):=y\cdot x$. Let $L: \A\ra\End(\A)$ with $x\mapsto L_x$ and $R: \A\ra\End(\A)$ with $x\mapsto R_x$ for all $x\in \A$ be two linear maps.
Then $(\A,L,R)$ is an $\A$-bimodule and hence $(\A^*,L^*-R^*,-R^*)$ is also an $\A$-bimodule.

A bimodule of a pre-Malcev algebra can be used to construct representations of the subadjacent Malcev algebra. In fact, if  $(V,\ell,\re)$ is a bimodule of a pre-Malcev algebra $\A$, then it can be checked directly that $(V,\ell)$ and $(V,\ell-\re)$ are both representations of the Malcev algebra $[\A]$.

Before giving a proof of Theorem \ref{thm4}, we first reveal a general connection between
$\OO$-operators of Malcev algebras and  pre-Malcev algebras, generalizing a link between Rota-Baxter operators and
pre-Malcev algebras (\cite[Proposition 9]{Mad17}); also see \cite[Section 3]{Bai07} for the case of Lie algebras.

\begin{prop}\label{prop4.3}
Let $(V,\rho)$ be a representation of a Malcev algebra $A$. Given a $T\in\OO_A(V,\rho)$, we define
a binary product on $V$ by $v\ast w:=\rho(T(v))w$ for all $v,w\in V$. Then the following results hold.
\begin{enumerate}
  \item $(V,\ast)$ is a pre-Malcev algebra.
  \item The binary product
  \begin{equation}\label{eq:4.8}
  T(v)\cdot T(w):=T(v\ast w)
  \end{equation}
 gives rise to a pre-Malcev algebra structure on $T(V):=\{T(v)\mid v\in V\}\subseteq A.$
  \item In particular, if $T$ is surjective, then there exists a compatible pre-Malcev algebra structure $\A_T$ on $A$.
\end{enumerate}
\end{prop}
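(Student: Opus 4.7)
The plan is to handle the three parts in order, concentrating the technical work in part (1).

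For part (1), I would verify the pre-Malcev identity $P_M(x,y,z,t)=0$ of (\ref{eq4.1}) directly for $v \ast w := \rho(T(v))w$ on $V$. Expanding each of the ten terms produces iterated expressions of the form $\rho(T(u_1))\rho(T(u_2))\rho(T(u_3))u_4$, where the $u_i$ are nested $\ast$-products of $x,y,z,t$. To reduce these, I would use the $\OO$-operator identity
\begin{equation*}
T(v)T(w) = T\bigl(\rho(T(v))w - \rho(T(w))v\bigr)
\end{equation*}
to rewrite inner occurrences of $T(u \ast u')$ as $T(u' \ast u) + T(u)T(u')$, and then apply the Malcev representation identity (\ref{eq2.1}) to decompose composed actions through single Malcev products $T(u)T(u')$. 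The resulting identity among the collected terms becomes a consequence of the Malcev identity (\ref{MI}), or equivalently the Sagle identity (\ref{SI}), on $A$. I expect this verification to be the main obstacle: the ten terms of $P_M$ combined with the four-term right-hand side of (\ref{eq2.1}) create a substantial bookkeeping problem, although each manipulation is mechanical and parallels the Rota-Baxter case in \cite[Proposition 9]{Mad17}.

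For part (2), I would first check that the proposed product $T(v) \cdot T(w) := T(v \ast w) = T(\rho(T(v))w)$ on $T(V)$ is well-defined. Independence on the representative in the first slot is immediate, since $T(\rho(T(v_1))w) = T(\rho(T(v_2))w)$ whenever $T(v_1) = T(v_2)$. For the second slot, if $T(w_1) = T(w_2)$ then $T(w_1 - w_2) = 0$, hence $\rho(T(w_1 - w_2))v = 0$, and the $\OO$-operator identity gives
\begin{equation*}
T\bigl(\rho(T(v))(w_1 - w_2)\bigr) = T\bigl(\rho(T(v))(w_1 - w_2) - \rho(T(w_1 - w_2))v\bigr) = T(v)\,T(w_1 - w_2) = 0.
\end{equation*}
Therefore $T : (V, \ast) \to (T(V), \cdot)$ is a well-defined surjective homomorphism of nonassociative algebras, and the pre-Malcev identity for $(T(V), \cdot)$ is inherited termwise from $(V, \ast)$ established in part (1).

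Part (3) is then essentially immediate. If $T$ is surjective, $T(V) = A$, so $\A_T := (A, \cdot)$ is a pre-Malcev algebra by part (2). To confirm compatibility, take $x = T(v)$ and $y = T(w)$ in $A$; then
\begin{equation*}
x \cdot y - y \cdot x = T(v \ast w) - T(w \ast v) = T\bigl(\rho(T(v))w - \rho(T(w))v\bigr) = T(v)\,T(w) = xy,
\end{equation*}
where the third equality is the $\OO$-operator identity. Thus the commutator product of $\A_T$ recovers the Malcev product of $A$, i.e.\ $[\A_T] = A$, so $\A_T$ is a compatible pre-Malcev algebra structure on $A$.
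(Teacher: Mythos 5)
Your proposal is correct and follows essentially the same route as the paper: a direct (if tedious) verification of $P_M=0$ on $(V,\ast)$ using the $\OO$-operator identity together with the representation identity \eqref{eq2.1}, the same well-definedness check for the product on $T(V)$ via $T(w\ast v)=T(w)T(v)=0$ when $T(v)=0$, and the same commutator computation for compatibility in part (3). The only cosmetic difference is that the cancellation in part (1) closes using \eqref{eq2.1} alone rather than requiring a separate appeal to the Malcev or Sagle identity on $A$.
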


\begin{proof} (1) A direct verification on Eq. (\ref{eq4.1}) applies to a proof of the first statement. 
\delete{In fact, for all $u, v, w, t\in V$, it follows that
\begin{eqnarray*}
P_M(u,v,w,t)&=&(v\ast w)\ast(u\ast t)-(w\ast v)\ast(u\ast t)+((u\ast v)\ast w)\ast t-((v\ast u)\ast w)\ast t\\
&&+(w\ast(v\ast u))\ast t-(w\ast(u\ast v))\ast t+v\ast((u\ast w)\ast t)-v\ast((w\ast u)\ast t)\\
&&+w\ast(u\ast(v\ast t))-u\ast(v\ast(w\ast t)).
\end{eqnarray*}
Expanding the first term of the right-hand side of the previous equation, we have
\begin{eqnarray*}
(v\ast w)\ast(u\ast t)-(w\ast v)\ast(u\ast t)&=&\rho(T(v))w\ast\rho(T(u))t-\rho(T(w))v\ast\rho(T(u))t\\
&=&\rho(T(\rho(T(v))w))\rho(T(u))t-\rho(T(\rho(T(w))v))\rho(T(u))t\\
&=&\rho(T(\rho(T(v))w-\rho(T(w))v))\rho(T(u))t\\
&=&\rho(T(v)T(w))\rho(T(u))t.
\end{eqnarray*}
Similarly, we obtain
\begin{eqnarray*}
((u\ast v)\ast w)\ast t-((v\ast u)\ast w)\ast t
&=&\rho(T(\rho(T(u)T(v))w))t,\\
(w\ast(v\ast u))\ast t-(w\ast(u\ast v))\ast t
&=&\rho(T(\rho(T(w))(\rho(T(v))u-\rho(T(u))v)))t,\\
v\ast((u\ast w)\ast t)-v\ast((w\ast u)\ast t)
&=&\rho(T(v))\rho(T(u)T(w))t,\\
w\ast(u\ast(v\ast t))-u\ast(v\ast(w\ast t))&=&\rho(T(w))\rho(T(u))\rho(T(v))t-\rho(T(u))\rho(T(v))\rho(T(w))t.
\end{eqnarray*}
Note that
\begin{eqnarray*}
\rho((T(u)T(v))T(w))t&=&\rho(T(u))\rho(T(v))\rho(T(w))t-\rho(T(w))\rho(T(u))\rho(T(v))t\\
&&+\rho(T(v))\rho(T(w)T(u))t-\rho(T(v)T(w))\rho(T(u))t\\
&=&\rho(T(u))\rho(T(v))\rho(T(w))t-\rho(T(w))\rho(T(u))\rho(T(v))t\\
&&-\rho(T(v))\rho(T(u)T(w))t-\rho(T(v)T(w))\rho(T(u))t.
\end{eqnarray*}
Then we derive
\begin{eqnarray*}
&&P_M(u,v,w,t)\\
&=&\rho(T(v)T(w))\rho(T(u))t+\rho(T(\rho(T(u)T(v))w))t+\rho(T(\rho(T(w))(\rho(T(v))u-\rho(T(u))v)))t\\
&&+\rho(T(v))\rho(T(u)T(w))t+\rho(T(w))\rho(T(u))\rho(T(v))t-\rho(T(u))\rho(T(v))\rho(T(w))t\\
&=&\rho(T(\rho(T(u)T(v))w+\rho(T(w))(\rho(T(v))u-\rho(T(u))v)))t-\rho((T(u)T(v))T(w))t\\
&=&\rho\big(T(\rho(T(\rho(T(u))v-\rho(T(v))u))w-\rho(T(w))(\rho(T(u))v-\rho(T(v))u))\big)t-\rho((T(u)T(v))T(w))t\\
&=&\rho(T(\rho(T(u))v-\rho(T(v))u)T(w))t-\rho((T(u)T(v))T(w))t\\
&=&\rho((T(u)T(v))T(w))t-\rho((T(u)T(v))T(w))t
=0.
\end{eqnarray*}
This implies that $(V,\ast)$ is a pre-Malcev algebra, completing the proof of the first statement.}

(2) We first show that Eq. \eqref{eq:4.8} is well-defined. We only need to verify that, if $T(v)=0$, then $T(v\ast w)=T(w\ast v)=0$ for all $w\in V$. In fact, if $T(v)$=0, then for all $w\in W$, we have
\begin{eqnarray*}
T(v\ast w)&=&T(\rho(T(v))w)=0,\\
T(w\ast v)&=& T(\rho(T(w))v)=T(\rho(T(w))v-\rho(T(v))w)=T(w)T(v)=0.
\end{eqnarray*}
Hence  Eq.~\eqref{eq:4.8} is well-defined.
Then it is direct to verify Eq. (\ref{eq4.1}) on $T(V)$.

\delete{show that $T$ preserves pre-Malcev algebra structures.  
To verify Eq. (\ref{eq4.1}) on $T(V)$,
for all  $T(u),T(v),T(w),T(t)\in T(V)$, where $u,v,w,t\in V$, we note that
\begin{eqnarray*}
&&P_M(T(u),T(v),T(w),T(t))\\
&=&(T(v)\cdot T(w))\cdot(T(u)\cdot T(t))-(T(w)\cdot T(v))\cdot(T(u)\cdot T(t))+((T(u)\cdot T(v))\cdot T(w))\cdot T(t)\\
&&-((T(v)\cdot T(u))\cdot T(w))\cdot T(t)+(T(w)\cdot(T(v)\cdot T(u)))\cdot T(t)-(T(w)\cdot(T(u)\cdot T(v)))\cdot T(t)\\
&&+T(v)\cdot((T(u)\cdot T(w))\cdot T(t))-T(v)\cdot((T(w)\cdot T(u))\cdot T(t))\\
&&+T(w)\cdot(T(u)\cdot(T(v)\cdot T(t)))-T(u)\cdot(T(v)\cdot (T(w)\cdot T(t)))\\
&=&T((v\ast w)\ast(u\ast t)-(w\ast v)\ast(u\ast t)+((u\ast v)\ast w)\ast t-((v\ast u)\ast w)\ast t+(w\ast(v\ast u))\ast t\\
&&-(w\ast(u\ast v))\ast t+v\ast((u\ast w)\ast t)-v\ast((w\ast u)\ast t)+w\ast(u\ast(v\ast t))-u\ast(v\ast(w\ast t)))\\
&=&T(P_M(u,v,w,t))=0.
\end{eqnarray*}
}

(3) By the second statement,  we obtain that  $A=T(V)$ has a pre-Malcev algebra structure $\A_T$. It is sufficient to show that the pre-Malcev algebra $\A_T$ is compatible with $A$. In fact, $T(v)\cdot T(w)-T(w)\cdot T(v)=T(v\ast w)-T(w\ast v)=T(\rho(T(v))w)-T(\rho(T(w))v)=T(\rho(T(v))w-\rho(T(w))v)=T(v)T(w)$, where the last equation follows from the assumption that $T\in\OO_A(V,\rho)$. Hence, $[\A_T]=A$, i.e., $\A_T$ is a compatible pre-Malcev algebra structure  on $A$.
\end{proof}

\begin{proof}[Proof of Theorem \ref{thm4}]
Since $T\in\OO_A(V,\rho)$ is invertible, for $x,y\in A$, there exist unique $v,w\in V$ such that $x=T(v)$ and $y=T(w)$. By the third statement of Proposition \ref{prop4.3}, there exists a compatible pre-Malcev algebra on $A$ defined by
\begin{eqnarray*}
x\cdot y= T(v)\cdot T(w)=T(v\ast w)=T(\rho(T(v))w)=T(\rho(x)T^{-1}(y)).
\end{eqnarray*}

Conversely, 
 we have $\id_A(L_{\id_A(x)}y-L_{\id_A(y)}x)=\id_A(L_x(y)-L_y(x))=\id_A(x\cdot y-y\cdot x)=\id_A(xy)=xy=\id_A(x)\id_A(y)$ for  all $x,y\in A$, which means $\id_A\in\OO_A(\A, L)$.
\end{proof}

Theorem \ref{thm4} has the following two direct applications for which the first one gives a way to construct a skew-symmetric solution of the CYBE on the semi-direct product of a Malcev algebra $A$ and its representation $(\A^*,L^{*})$; and the second one shows that
a Malcev algebra admitting a non-degenerate symplectic form $\B$ must have a compatible
pre-Malcev algebra structure; compared with \cite{Bau99} for the case of Lie algebras.

\begin{coro}\label{coro4.4}
Let $\A$ be a pre-Malcev algebra with a basis $\{e_1,\dots, e_n\}$ and $\{\varepsilon_1,\dots, \varepsilon_n\}$ be the basis of $A^*$ dual to
$\{e_1,\dots, e_n\}$. Then the element
$$r:=\sum_{i=1}^n (e_i\otimes \varepsilon_i-\varepsilon_i\otimes e_i)$$
is a skew-symmetric solution of the CYBE on the Malcev algebra $[\A]\ltimes_{L^*}\A^*$.
\end{coro}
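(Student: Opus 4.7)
The plan is to derive this corollary as a direct composite of Theorem \ref{thm4} and Theorem \ref{thm3}, by producing a canonical $\OO$-operator from the pre-Malcev algebra $\A$ and then converting it into a skew-symmetric solution of the CYBE on the appropriate semi-direct product.

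First, I would set $A := [\A]$, the subadjacent Malcev algebra of $\A$, so that $\A$ itself is a compatible pre-Malcev algebra structure on $A$. Applying the converse direction of Theorem \ref{thm4} to this situation yields $\id_{A} \in \OO_{A}(\A, L)$, where $L: [\A] \to \End(\A)$ is the left multiplication representation. Thus the identity map on $[\A]$ is already a canonical $\OO$-operator associated to the representation $(\A, L)$ of $[\A]$.

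Next, I would feed this $\OO$-operator into Theorem \ref{thm3} with $V = \A$ and $\rho = L$. The theorem then guarantees that the element $r_{\id_A}$ defined via Eq. \eqref{eq3.2} is a skew-symmetric solution of the CYBE on the semi-direct product Malcev algebra $[\A] \ltimes_{L^{*}} \A^*$. It remains to check that $r_{\id_A}$ coincides with the element $r$ in the statement. Using the basis $\{e_1, \dots, e_n\}$ of $V = \A$ together with its dual basis $\{\varepsilon_1, \dots, \varepsilon_n\}$ of $V^* = \A^*$, Eq. \eqref{eq3.1} gives
\[
\wt{\id_A} = \sum_{i=1}^n \id_A(e_i) \otimes \varepsilon_i = \sum_{i=1}^n e_i \otimes \varepsilon_i,
\]
and therefore $r_{\id_A} = \wt{\id_A} - \sigma(\wt{\id_A}) = \sum_{i=1}^n(e_i \otimes \varepsilon_i - \varepsilon_i \otimes e_i) = r$, as desired.

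There is no real obstacle here beyond matching conventions: the proof is essentially a bookkeeping composition of two established results. The only point requiring mild care is to confirm that the identifications used in Theorem \ref{thm3} (the embeddings $A \hookrightarrow A \oplus V^*$ and $V^* \hookrightarrow A \oplus V^*$, and the tensor-hom identification $A \otimes V^* \cong \Hom(V, A)$) specialize correctly when $V = \A$ and $A = [\A]$, but this is immediate from the chosen dual bases.
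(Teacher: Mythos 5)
Your proposal is correct and follows exactly the paper's own argument: invoke the converse direction of Theorem \ref{thm4} to obtain $\id_A\in\OO_{[\A]}(\A,L)$, compute $\wt{\id_A}=\sum_i e_i\otimes\varepsilon_i$ from Eq. \eqref{eq3.1}, and apply Theorem \ref{thm3} to conclude that $r_{\id_A}=r$ solves the CYBE on $[\A]\ltimes_{L^*}\A^*$. No gaps; the bookkeeping identifications you flag are indeed immediate.
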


\begin{proof} Consider the identity map $\id_A$. By Theorem \ref{thm4}, we see that $\id_A$ is an $\OO$-operator of $[\A]$ associated to $(\A,L)$.
It follows from Eq. (\ref{eq3.1}) that $\wt{\id_A}=\sum_{i=1}^n \id_A(e_i)\otimes \varepsilon_i=\sum_{i=1}^ne_i\otimes \varepsilon_i$.
Thus it follows from Theorem \ref{thm3} that $r=\sum_{i=1}^n (e_i\otimes \varepsilon_i-\varepsilon_i\otimes e_i)$ is a skew-symmetric solution of the CYBE on the Malcev algebra $[\A]\ltimes_{L^*}\A^*$, as desired.
\end{proof}

\begin{prop}\label{prop4.5}
Let $A$ be a Malcev algebra admitting a non-degenerate symplectic form $\B.$ Then there exists a compatible
pre-Malcev algebra $(\A, \cdot)$ on $A$ such that $\B(x\cdot y,z)=-\B(y,xz)$ for all $x,y,z\in A$.
\end{prop}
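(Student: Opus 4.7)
The plan is to leverage Theorems \ref{thm1}, \ref{thm2}, and \ref{thm4} via the isomorphism $\varphi_\B$ used in the proof of Proposition \ref{prop3.1}. Since $\B$ is non-degenerate, $\varphi_\B : A \to A^*$ defined by $\varphi_\B(x)(y) = \B(x,y)$ is a linear isomorphism, so $T := \varphi_\B^{-1} : A^* \to A$ is invertible. The strategy is to exhibit $T$ as an invertible $\OO$-operator associated to the coadjoint representation and then apply Theorem \ref{thm4}.

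First I would pick the unique $r \in A \otimes A$ with $T_r = T$ under the natural identification $A \otimes A \cong \Hom(A^*, A)$. The bilinear form $\B_r$ from Lemma \ref{lem3.4} then satisfies
\begin{equation*}
\B_r(x, y) = \pair{T_r^{-1}(x), y} = \pair{\varphi_\B(x), y} = \B(x, y),
\end{equation*}
so $\B_r = \B$. Since $\B$ is skew-symmetric, Lemma \ref{lem3.4} gives that $r$ is skew-symmetric. Theorem \ref{thm2} then yields $r \in \Sol(A)$, and Theorem \ref{thm1} gives $T = T_r \in \OO_A(A^*, \ad^*)$. Since $T$ is invertible, Theorem \ref{thm4} produces a compatible pre-Malcev algebra structure $\A_T$ on $A$ defined by
\begin{equation*}
x \cdot y := T(\ad^*_x T^{-1}(y)) = \varphi_\B^{-1}(\ad^*_x \varphi_\B(y)).
\end{equation*}

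To finish, I would verify the identity $\B(x \cdot y, z) = -\B(y, xz)$ directly: for any $z \in A$,
\begin{equation*}
\B(x \cdot y, z) = \pair{\varphi_\B(x \cdot y), z} = \pair{\ad^*_x \varphi_\B(y), z} = -\pair{\varphi_\B(y), xz} = -\B(y, xz),
\end{equation*}
using $\pair{\ad^*_x \xi, v} = -\pair{\xi, xv}$ and $\B(u, v) = \pair{\varphi_\B(u), v}$. The main obstacle is not a deep calculation but rather the careful bookkeeping of the identifications between $r$, $T_r$, $\B_r$, and $\varphi_\B$ — in particular, ensuring that the $r$ constructed from $\varphi_\B^{-1}$ genuinely satisfies $\B_r = \B$, which is the crucial input to Theorem \ref{thm2}. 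An alternative self-contained route would define $x \cdot y$ implicitly via the required equation (well-defined by non-degeneracy) and then verify compatibility together with the pre-Malcev identity $P_M = 0$ by hand using the Sagle identity; this avoids the tensor formalism but is considerably longer.
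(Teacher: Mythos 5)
Your proposal is correct and follows essentially the same route as the paper: both define $T=\varphi_\B^{-1}:A^*\to A$ via $\pair{T^{-1}(x),y}=\B(x,y)$, establish $T\in\OO_A(A^*,\ad^*)$ (the paper by repeating the computation from the proof of Theorem \ref{thm2}, you by formally passing to the element $r$ with $T_r=T$ and citing Lemma \ref{lem3.4} and Theorems \ref{thm2} and \ref{thm1} — a slightly cleaner bookkeeping of the same facts), and then apply Theorem \ref{thm4} and check the identity $\B(x\cdot y,z)=-\B(y,xz)$ by the same one-line pairing computation.
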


\begin{proof}
Since $\B$ is a non-degenerate symplectic form,  we  define an invertible linear map $T: A^*\ra A$
by $\pair{T^{-1}(x),y}=\B(x,y)$ for $x,y\in A$. 
A similar argument as in the proof of Theorem \ref{thm2} shows that $T\in\OO_A(A^*,\ad^*)$.
By Theorem \ref{thm4}, there exists a compatible pre-Malcev algebra $\A$ given by
$x\cdot y=T(\ad^*_x(T^{-1}(y)))$ for all $x,y\in A$. 
Hence we derive
\begin{eqnarray*}
\B(x\cdot y,z)=\B(T(\ad^*_x(T^{-1}(y))),z)=\pair{\ad^*_x(T^{-1}(y)),z}=-\pair{T^{-1}(y), xz}=-\B(y,xz)
\end{eqnarray*}
for all $x,y,z\in A.$ This completes the proof.
\end{proof}

\subsection{The CYBE on pre-Malcev algebras}

Let $(\A,\cdot)$ be a pre-Malcev algebra over $\F$ and $(V,\ell,\re)$ be an $\A$-bimodule. A linear map $T: V \ra\A$
is called an \textbf{$\mathcal {O}$-operator} of
$\A$ associated to $(V,\ell,\re)$ if
\begin{equation}\label{eq4.8}
T(v)\cdot T(w) = T(\ell_{T(v)}(w)+\re_{T(w)}(v))
\end{equation}
for all $v, w \in V$. We  write $\OO_\A(V,\ell,\re)$ for the set of all $\OO$-operators of $\A$
associated to $(V,\ell,\re)$.
We say that an element $r=\sum_{i}x_{i}\otimes y_{i}\in \A\otimes \A$ is a solution of the CYBE on $\A$ if
$-r_{12}\cdot r_{13}+r_{12}\cdot r_{23}+r_{13}r_{23}=0$, where
 $$r_{12}\cdot r_{13}=\sum_{i,j}x_{i}\cdot x_j\otimes y_{i}\otimes y_j,   r_{12}\cdot r_{23}=\sum_{i,j}x_{i}\otimes y_i\cdot x_j\otimes y_j,$$
 $$ r_{13}r_{23}=\sum_{i,j}x_{i}\otimes x_j\otimes y_iy_j=\sum_{i,j}x_{i}\otimes x_j\otimes (y_i\cdot  y_j-y_j\cdot  y_i),$$
denote the images of $r$ under the three standard embeddings from $\A\otimes\A$ to $\A^{\otimes 3}$ respectively.
We use $\Sol(\A)$ to denote the set of all solutions of the CYBE on $\A$. We first obtain an
analogue of Theorem \ref{thm1} in the case of pre-Malcev algebras and symmetric solutions of the CYBE.

\begin{thm}\label{thm4.6}
Let $\A$ be a finite-dimensional pre-Malcev algebra over a field $\F$ of characteristic zero and $r$ be a symmetric element in $\A\otimes \A$. Then $r\in \Sol(\A)$ if and only if $T_r\in \OO_\A(\A^*,L^*-R^*,-R^*)$.
\end{thm}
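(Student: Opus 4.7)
The plan is to mirror the proof of Theorem \ref{thm1} in the pre-Malcev setting. The only new combinatorial input required is the symmetric analogue of Lemma \ref{lem2.7}: for any symmetric $r=\sum_i x_i\otimes y_i\in\A\otimes\A$ one has $\pair{\xi,T_r(\eta)}=\pair{\eta,T_r(\xi)}$ for all $\xi,\eta\in\A^*$, together with the two equivalent expressions $T_r(\xi)=\sum_i\pair{\xi,y_i}x_i=\sum_i\pair{\xi,x_i}y_i$. Both follow directly from $\sigma(r)=r$ and the definition of $T_r$.

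My first step would be to compute the natural pairing of each of the three terms in the pre-Malcev CYBE against an arbitrary $\xi\otimes\eta\otimes\zeta\in(\A^*)^{\otimes 3}$. By absorbing inner sums into values of $T_r$ using the identities above, one obtains
\begin{align*}
\pair{\xi\otimes\eta\otimes\zeta,\, r_{12}\cdot r_{13}}&=\pair{\xi,\,T_r(\eta)\cdot T_r(\zeta)},\\
\pair{\xi\otimes\eta\otimes\zeta,\, r_{12}\cdot r_{23}}&=\pair{\eta,\,T_r(\xi)\cdot T_r(\zeta)},\\
\pair{\xi\otimes\eta\otimes\zeta,\, r_{13}r_{23}}&=\pair{\zeta,\,T_r(\xi)\cdot T_r(\eta)-T_r(\eta)\cdot T_r(\xi)}.
\end{align*}
The latter two identities use the symmetry of $r$ crucially, to rewrite sums of the form $\sum_i\pair{\xi,x_i}y_i$ as $T_r(\xi)$, whereas the first uses only the default expression for $T_r$. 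Combining them, $r\in\Sol(\A)$ is equivalent to
\[
-\pair{\xi,T_r(\eta)\cdot T_r(\zeta)}+\pair{\eta,T_r(\xi)\cdot T_r(\zeta)}+\pair{\zeta,T_r(\xi)\cdot T_r(\eta)}-\pair{\zeta,T_r(\eta)\cdot T_r(\xi)}=0
\]
holding for all $\xi,\eta,\zeta\in\A^*$.

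Next, I would translate the $\OO$-operator condition $T_r(\alpha)\cdot T_r(\beta)=T_r\bigl(L^*_{T_r(\alpha)}(\beta)-R^*_{T_r(\alpha)}(\beta)-R^*_{T_r(\beta)}(\alpha)\bigr)$ into pairing form. Pairing with an arbitrary $\gamma\in\A^*$, then moving $T_r$ across using the symmetric analogue of Lemma \ref{lem2.7} and unwinding $L^*,R^*$ via Eq.~(\ref{eq4.7}), shows that $T_r\in\OO_\A(\A^*,L^*-R^*,-R^*)$ is equivalent to
\[
\pair{\gamma,T_r(\alpha)\cdot T_r(\beta)}+\pair{\beta,T_r(\alpha)\cdot T_r(\gamma)}-\pair{\beta,T_r(\gamma)\cdot T_r(\alpha)}-\pair{\alpha,T_r(\gamma)\cdot T_r(\beta)}=0
\]
for all $\alpha,\beta,\gamma\in\A^*$. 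Relabeling $(\alpha,\beta,\gamma)\mapsto(\xi,\zeta,\eta)$ turns this into the pairing form of the CYBE derived above, so the two conditions coincide term by term and the non-degeneracy of the natural pairing closes the equivalence in both directions. The main obstacle is the careful bookkeeping in the first step: the three CYBE terms treat the two tensor slots asymmetrically while $r$ itself is symmetric, so the two expressions for $T_r$ must be invoked in precisely the right index positions to recognize each factor; once the pairing form of the CYBE is obtained, matching it with the $\OO$-operator condition is essentially a renaming of dummy variables.
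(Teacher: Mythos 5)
Your proposal is correct and follows essentially the same route as the paper: it pairs the three terms of the pre-Malcev CYBE against $\xi\otimes\eta\otimes\zeta$, uses the symmetric analogue of Lemma \ref{lem2.7} (namely $\pair{\xi,T_r(\eta)}=\pair{\eta,T_r(\xi)}$ for symmetric $r$) to move $T_r$ across the pairing and unwind $L^*,R^*$, and then matches the resulting identity with the pairing form of the $\OO$-operator condition, exactly as in the paper's Eq.~(\ref{eq4.10}). The only cosmetic difference is that you finish by relabeling dummy covectors, whereas the paper assembles both sides into a single displayed identity before invoking non-degeneracy.
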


\begin{proof} Suppose that $r=\sum_{i}x_{i}\otimes y_{i}\in\A\otimes\A$ is symmetric. For all $\xi,\eta,\zeta\in \A^*$, we consider the natural pairing on $\A^{\otimes 3}$, and obtain
\begin{eqnarray*}
\pair{\xi\otimes\eta\otimes\zeta,r_{12}\cdot r_{13}}&=&\sum_{i,j}\pair{\xi\otimes\eta\otimes\zeta, x_{i}\cdot x_j\otimes y_{i}\otimes y_j}\\
&=&\sum_{i,j}\pair{\xi,x_i\cdot x_j}\pair{\eta,y_i}\pair{\zeta,y_j}\\
&=&\sum_{i,j}\pair{\xi,\pair{\eta, y_{i}}x_{i}\cdot\pair{\zeta,y_j}x_j}\\
&=&\pair{\xi,T_r(\eta)\cdot T_r(\zeta)}.
\end{eqnarray*}
Similarly, we have $\pair{\xi\otimes\eta\otimes\zeta, r_{12}\cdot r_{23}}=\pair{\eta, T_r(\xi)\cdot T_r(\zeta)}$ and $\pair{\xi\otimes\eta\otimes\zeta,r_{13}r_{23}}=\pair{\zeta, T_r(\xi)T_r(\eta)}$.
We  observe that
\begin{eqnarray*}
\pair{\xi, T_r((L^*_{T_r(\eta)}-R^*_{T_r(\eta)})(\zeta))}&=&\pair{(L^*_{T_r(\eta)}-R^*_{T_r(\eta)})(\zeta), T_r(\xi)}\\
&=&-\pair{\zeta, L_{T_r(\eta)}(T_r(\xi))}+\pair{\zeta, R_{T_r(\eta)}(T_r(\xi))}\\
&=&\pair{\zeta, T_r(\xi)\cdot T_r(\eta)-T_r(\eta)\cdot T_r(\xi)}\\
&=&\pair{\zeta, T_r(\xi)T_r(\eta)},
\end{eqnarray*}
and an analogous argument shows that $\pair{\xi, T_r(-R^*_{T_r(\zeta)}(\eta))}=\pair{\eta,T_r(\xi)\cdot T_r(\zeta)}.$
Hence
\begin{eqnarray}\label{eq4.10}
&&\pair{\xi, T_r(\eta)\cdot T_r(\zeta)-T_r((L^*_{ T_r(\eta)}-R^*_{ T_r(\eta)})(\zeta))-T_r(-R^*_{ T_r(\zeta)}(\eta))}\nonumber\\
&=&\pair{\xi,T_r(\eta)\cdot T_r(\zeta)}-\pair{\zeta,T_r(\xi)T_r(\eta)}-\pair{\eta,T_r(\xi)\cdot T_r(\zeta)}\\
&=&-\pair{\xi\otimes\eta\otimes\zeta,-r_{12}\cdot r_{13}+r_{12}\cdot r_{23}+r_{13}r_{23}}.\nonumber
\end{eqnarray}
$(\RA)$
Now we suppose $r\in \Sol(\A)$, that is, $-r_{12}\cdot r_{13}+r_{12}\cdot r_{23}+r_{13}r_{23}=0$. Thus it follows from Eq. (\ref{eq4.10}) that
$\pair{\xi, T_r(\eta)\cdot T_r(\zeta)-T_r((L^*_{ T_r(\eta)}-R^*_{ T_r(\eta)})(\zeta))-T_r(-R^*_{ T_r(\zeta)}(\eta))}=0$. Since $\xi$ is arbitrary, we see that $T_r(\eta)\cdot T_r(\zeta)-T_r((L^*_{ T_r(\eta)}-R^*_{ T_r(\eta)})(\zeta))-T_r(-R^*_{ T_r(\zeta)}(\eta))=0$, i.e. $T_r\in \OO_\A(\A^*,L^*-R^*,-R^*)$.
$(\LA)$ Conversely, the assumption that $T_r\in \OO_\A(\A^*,L^*-R^*,-R^*)$, together with Eq. (\ref{eq4.10}), implies that $\pair{\xi\otimes\eta\otimes\zeta,-r_{12}\cdot r_{13}+r_{12}\cdot r_{23}+r_{13}r_{23}}=0$. Thus $-r_{12}\cdot r_{13}+r_{12}\cdot r_{23}+r_{13}r_{23}=0$, i.e., $r\in \Sol(\A)$. The proof is completed.
\end{proof}

The following result is a symmetric element version of Theorem \ref{thm3} for pre-Malcev algebras.

\begin{thm}\label{thm4.7}
Let $(V,\ell,\re)$ be a representation of a finite-dimensional pre-Malcev algebra $\A$ over a field $\F$ of characteristic zero and $T: V\ra\A$ be a linear map.
 Suppose that $\{v_1,\dots,v_n\}$ is a basis of $V$ and $\{\xi_1,\dots,\xi_n\}$ is the dual basis of $V^*$. Define
\begin{equation}
\label{eq4.11}
\wt{T}:=\sum_{i=1}^n T(v_i)\otimes \xi_i\in \A\otimes V^*\subseteq (\A\oplus V^*)\otimes (\A\oplus V^*).
\end{equation}   Then $T\in\OO_\A(V,\ell,\re)$ if and only if $s_T=\wt{T}+\sigma(\wt{T})\in \Sol(\A\ltimes_{\ell^*-\re^*,-\re^*}V^*)$. 
\end{thm}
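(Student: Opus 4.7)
The plan is to adapt the proof of Theorem~\ref{thm3} by replacing the skew-symmetric element $r_T$ with the symmetric element $s_T$ and by using the pre-Malcev multiplication of $\A\ltimes_{\ell^*-\re^*,-\re^*}V^*$ in place of the Malcev one. First I would expand
\[
s_T=\sum_{i=1}^n\bigl(T(v_i)\otimes\xi_i+\xi_i\otimes T(v_i)\bigr)
\]
and compute each of $(s_T)_{12}\cdot(s_T)_{13}$, $(s_T)_{12}\cdot(s_T)_{23}$ and $(s_T)_{13}(s_T)_{23}$ using the multiplication rules given by Eq.~\eqref{eq4.6}: for $x\in\A$ and $\xi\in V^*$, one has $x\cdot\xi=(\ell^*-\re^*)_x\xi$, $\xi\cdot x=-\re^*_x\xi$, and $\xi\cdot\eta=0$. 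Each of the three pieces then splits into a sum whose summands carry mixed $\A$- and $V^*$-components in the three tensor slots.

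Second, to collapse the $V^*$-valued outputs back onto the basis $\{\xi_k\}$ and to move the $T$-image into the $\A$-slot, I would apply the dual-action identities derived from \eqref{eq4.7},
\begin{align*}
(\ell^*-\re^*)_{T(v_i)}\xi_j &= -\sum_{k=1}^n\pair{\xi_j,(\ell-\re)_{T(v_i)}v_k}\,\xi_k,\\
\re^*_{T(v_i)}\xi_j &= -\sum_{k=1}^n\pair{\xi_j,\re_{T(v_i)}v_k}\,\xi_k,
\end{align*}
together with the reindexing trick used to derive Eq.~\eqref{eq3.6}. After grouping, I expect to obtain
\[
-(s_T)_{12}\cdot(s_T)_{13}+(s_T)_{12}\cdot(s_T)_{23}+(s_T)_{13}(s_T)_{23}=\sum_{i,j=1}^n\bigl(D_{ij}\otimes\xi_i\otimes\xi_j+\xi_i\otimes D'_{ij}\otimes\xi_j+\xi_i\otimes\xi_j\otimes D''_{ij}\bigr),
\]
where each $\A$-component $D_{ij},D'_{ij},D''_{ij}$ equals, up to relabelling of the indices $(i,j)$, the pre-Malcev $\OO$-operator defect
\[
\Delta_{ij}:=T(v_i)\cdot T(v_j)-T\bigl(\ell_{T(v_i)}v_j+\re_{T(v_j)}v_i\bigr).
\]

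Third, I would invoke linear independence. Fixing a basis $\{x_1,\dots,x_m\}$ of $\A$, the set $\{x_k\otimes\xi_i\otimes\xi_j,\;\xi_i\otimes x_k\otimes\xi_j,\;\xi_i\otimes\xi_j\otimes x_k\}$ lies in a basis of $(\A\ltimes_{\ell^*-\re^*,-\re^*}V^*)^{\otimes 3}$, so the displayed sum vanishes if and only if every $\Delta_{ij}=0$, equivalently $T\in\OO_\A(V,\ell,\re)$. Both directions of the equivalence follow simultaneously.

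The main obstacle is the bookkeeping. Because the left and right coefficient maps of the bimodule $(V^*,\ell^*-\re^*,-\re^*)$ are different, and because the pre-Malcev CYBE has the asymmetric sign pattern $-r_{12}\cdot r_{13}+r_{12}\cdot r_{23}+r_{13}r_{23}$, each of the three terms of the CYBE expands into four sub-terms with distinct sign patterns, yielding twelve sub-terms to track. Verifying that after substitution of the dual-action formulas and after reindexing these sub-terms telescope exactly into the clean combination of $\OO$-operator defects above (rather than a larger expression) is where care is required; the computation is essentially parallel to the derivation of Eq.~\eqref{eq3.6} and poses no conceptual difficulty beyond careful sign management.
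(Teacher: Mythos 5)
Your proposal follows essentially the same route as the paper: expand the three CYBE terms via the semi-direct product multiplication, use the dual-action formulas and reindexing to rewrite everything over the basis $\{x_k\otimes\xi_i\otimes\xi_j,\ \xi_i\otimes x_k\otimes\xi_j,\ \xi_i\otimes\xi_j\otimes x_k\}$, and conclude by linear independence. The only minor imprecision is that the third-slot component is not a single defect up to relabelling but the antisymmetrization $\Delta_{ij}-\Delta_{ji}$ (since $\xi_i\cdot\xi_j=0$ forces the commutator product there); this does not affect the argument, as the first two families already force every $\Delta_{ij}=0$.
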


\begin{proof}
Note that
\begin{eqnarray}\label{eq4.13}
\qquad-(s_T)_{12}\cdot(s_T)_{13}&=&\sum_{i, j=1}^n(-T(v_i)\cdot T(v_j)\otimes \xi_i\otimes \xi_j-(\ell^{*}_{T(v_i)}-\re^*_{T(v_i)})(\xi_j)\otimes \xi_i\otimes T(v_j)\qquad\\
&&\;\;\;\;\;\;\;\;-(-\re^*_{T(v_j)})(\xi_i)\otimes T(v_i)\otimes \xi_j)\nonumber.
\end{eqnarray}
As $(\ell^{*}_{T(v_i)}-\re^*_{T(v_i)})(\xi_j)\in V^*$ for all $i,j\in\{1,\dots,n\}$, we  assume that $(\ell^{*}_{T(v_i)}-\re^*_{T(v_i)})(\xi_j)=a_1(ij) \xi_1+\dots+a_n(ij) \xi_n$, where $a_s(ij)\in\F$. For any $k\in\{1,\dots,n\}$, we have
$$ a_k(ij)=\pair{\sum_{s=1}^n a_s(ij)\xi_s,v_k}=\pair{(\ell^{*}_{T(v_i)}-\re^*_{T(v_i)})(\xi_j),v_k}=-\pair{\xi_j,(\ell_{T(v_i)}-\re_{T(v_i)})(v_k)}.$$
Thus
\begin{equation}
\label{eq4.14}
(\ell^{*}_{T(v_i)}-\re^*_{T(v_i)})(\xi_j)=-\sum_{k=1}^n\pair{\xi_j,(\ell_{T(v_i)}-\re_{T(v_i)})(v_k)}\xi_k.
\end{equation}
Similarly,
\begin{equation}
\label{eq4.15}
\re^*_{T(v_i)}(\xi_j)=-\sum_{k=1}^n\pair{\xi_j,\re_{T(v_i)}(v_k)}\xi_k.
\end{equation} Hence, it follows from Eqs. (\ref{eq4.14}) and (\ref{eq4.15}) that
\begin{eqnarray*}
\sum_{i,j=1}^n(\ell^{*}_{T(v_i)}-\re^*_{T(v_i)})(\xi_j)\otimes \xi_i\otimes T(v_j)
&=&\sum_{i,j=1}^n\left(-\sum_{k=1}^n\pair{\xi_j,(\ell_{T(v_i)}-\re_{T(v_i)})(v_k)}\xi_k\right)\otimes \xi_i\otimes T(v_j)\\
&=&-\sum_{i,k=1}^n \xi_k\otimes \xi_i\otimes T\left(\sum_{j=1}^n\pair{\xi_j,(\ell_{T(v_i)}-\re_{T(v_i)})(v_k)}v_j\right)\\
&=&-\sum_{i,j=1}^n \xi_j\otimes \xi_i\otimes T\left(\sum_{k=1}^n\pair{\xi_k,(\ell_{T(v_i)}-\re_{T(v_i)})(v_j)}v_k\right)\\
&=&-\sum_{i,j=1}^n \xi_j\otimes \xi_i\otimes T((\ell_{T(v_i)}-\re_{T(v_i)})(v_j)),
\end{eqnarray*}
and
\begin{eqnarray*}
\sum_{i,j=1}^n \re^*_{T(v_j)}(\xi_i)\otimes T(v_i)\otimes \xi_j
=-\sum_{i,j=1}^n\xi_i\otimes T(\re_{T(v_j)}(v_i))\otimes \xi_j.
\end{eqnarray*}
Now Eq. (\ref{eq4.13}) reads
\begin{eqnarray*}
&&-(s_T)_{12}\cdot (s_T)_{13}\\
&=&\sum_{i,j=1}^n(-T(v_i)\cdot T(v_j)\otimes \xi_i\otimes \xi_j+\xi_j\otimes \xi_i \otimes T((\ell_{T(v_i)}-\re_{T(v_i)})(v_j))-\xi_i\otimes T(\re_{T(v_j)}(v_i))\otimes \xi_j).
\end{eqnarray*}
We do similar calculations on $(s_T)_{12}\cdot(s_T)_{23}$ and $(s_T)_{13}(s_T)_{23}$  and conclude
\begin{eqnarray}\label{eq4.16}
&&-(s_T)_{12}\cdot(s_T)_{13}+(s_T)_{12}\cdot(s_T)_{23}+(s_T)_{13}(s_T)_{23}\nonumber\\
&=&\sum_{i,j=1}^n (-T(v_i)\cdot T(v_j)+T(\ell_{T(v_i)}(v_j))+T(\re_{T(v_j)}(v_i)))\otimes \xi_i\otimes \xi_j\\
&&+\sum_{i,j=1}^n \xi_i\otimes(T(v_i)\cdot T(v_j)-T(\ell_{T(v_i)}(v_j))-T(\re_{T(v_j)}(v_i)))\otimes \xi_j\nonumber\\
&&+\sum_{i,j=1}^n \xi_i\otimes \xi_j\otimes(T(v_i)T(v_j)-T((\ell_{T(v_i)}-\re_{T(v_i)})(v_j))+T((\ell_{T(v_j)}-\re_{T(v_j)})(v_i))).\nonumber
\end{eqnarray}
We are ready to complete the proof.
($\RA$) We assume that $T\in\OO_\A(V,\ell,\re)$, that is,
$T(v_i)\cdot T(v_j)-T(\ell_{T(v_i)}(v_j))-T(\re_{T(v_j)}(v_i))=0$ for all $i,j\in\{1,\dots,n\}$. Thus the right-hand side of Eq. (\ref{eq4.16}) must be zero. This implies that $-(s_T)_{12}\cdot(s_T)_{13}+(s_T)_{12}\cdot(s_T)_{23}+(s_T)_{13}(s_T)_{23}=0$, i.e., $s_T$ is a solution of the CYBE on the pre-Malcev algebra $\A\ltimes_{\ell^*-\re^*,-\re^*}V^*$.
($\LA$)
Suppose that $s_T\in \Sol(\A\ltimes_{\ell^*-\re^*,-\re^*}V^*)$ is a solution of the CYBE. 
Thus the left-hand side of Eq. (\ref{eq4.16}) is equal to zero. We write $\{x_1,\dots,x_m\}$ for a basis of $\A$ and assume that $T(v_i)\cdot T(v_j)-T(\ell_{T(v_i)}(v_j)+\re_{T(v_j)}(v_i))=a_1(ij)x_1+\dots+a_m(ij)x_m$ for some $a_1(ij),\dots,a_m(ij)\in\F$. Hence, $$0=\sum_{i,j=1}^n \sum_{k=1}^m (-a_k(ij)x_k\otimes \xi_i\otimes \xi_j+a_k(ij)\xi_j\otimes x_k\otimes \xi_i+(a_k(ij)-a_k(ji))\xi_i\otimes \xi_j\otimes x_k).$$
The fact that $\{x_k\otimes \xi_i\otimes \xi_j, \xi_j\otimes x_k\otimes \xi_i, \xi_i\otimes \xi_j\otimes x_k\mid 1\leqslant i,j\leqslant n,1\leqslant k\leqslant m\}$ is a subset of a basis of $(\A\ltimes_{\ell^*-\re^*,-\re^*}V^*)^{\otimes 3}$ implies that these elements are linearly independent over $\F$.
Hence, $a_k(ij)=0$ for all $k,i$ and $j$. Hence,
$T(v_i)\cdot T(v_j)=T((\ell_{T(v_i)}(v_j)-\re_{T(v_j)}(v_i))$
for all $i,j\in\{1,\dots,n\}$. This shows that $T\in\OO_\A(V,\ell,\re)$ and we are done.
\end{proof}

\begin{coro}
Let $\A$ be a pre-Malcev algebra with a basis $\{e_1,\dots, e_n\}$ and $\{\varepsilon_1,\dots, \varepsilon_n\}$ be the basis of $\A^*$ dual to $\{e_1,\dots, e_n\}$.
Then the element
$$s:=\sum_{i=1}^n (e_i\otimes \varepsilon_i+\varepsilon_i\otimes e_i)$$
is a symmetric solution of the CYBE on the pre-Malcev algebra $\A\ltimes_{L^*,0}\A^*$.
\end{coro}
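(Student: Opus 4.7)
The plan is to mirror the derivation of Corollary \ref{coro4.4}, substituting Theorem \ref{thm4.7} for Theorem \ref{thm3}. The goal is to realise the displayed element $s$ as $s_T$ for a suitably chosen $\OO$-operator $T$ into a suitable bimodule of $\A$, and then read off the conclusion from Theorem \ref{thm4.7}. The natural candidate is $V=\A$ with bimodule structure $(\ell,\re)=(L,0)$, and $T=\id_\A$; this choice is dictated by the matching conditions $\ell^*-\re^*=L^*$ and $-\re^*=0$, which reproduce exactly the semi-direct product $\A\ltimes_{L^*,0}\A^*$ in which $s$ is supposed to live.

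Two things need to be checked. First, $(\A,L,0)$ should be a bimodule of $\A$: every summand in the bimodule axioms (\ref{eq4.2})--(\ref{eq4.4}) carries at least one factor of $\re$, so all three identities collapse to $0=0$ when $\re=0$; the remaining axiom (\ref{eq4.5}) does not involve $\re$ at all and holds for $\ell=L$ because $(\A,L,R)$ is already known to be an $\A$-bimodule. Second, $\id_\A$ should belong to $\OO_\A(\A,L,0)$: via Eq.~(\ref{eq4.8}), this reduces to the tautology $v\cdot w=L_v(w)$ for all $v,w\in\A$.

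With these two points in hand, Theorem \ref{thm4.7} applies directly. Formula (\ref{eq4.11}) gives $\wt{\id_\A}=\sum_{i=1}^n e_i\otimes\varepsilon_i$, and hence
$$
s_{\id_\A} \;=\; \wt{\id_\A}+\sigma(\wt{\id_\A}) \;=\; \sum_{i=1}^n(e_i\otimes\varepsilon_i+\varepsilon_i\otimes e_i) \;=\; s.
$$
Theorem \ref{thm4.7} then yields $s\in\Sol(\A\ltimes_{L^*,0}\A^*)$, which is the claim. No substantive obstacle arises: the only step with any content is the bimodule verification for $(\A,L,0)$, and as noted this is immediate from the vanishing of all $\re$-terms, so the whole argument is essentially a mechanical application of Theorem \ref{thm4.7} to $T=\id_\A$.
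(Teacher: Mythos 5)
Your proposal is correct and follows the same route as the paper: take $T=\id_\A$ as an $\OO$-operator associated to the bimodule $(\A,L,0)$, compute $\wt{\id_\A}=\sum_i e_i\otimes\varepsilon_i$, and apply Theorem \ref{thm4.7}. The only difference is that you spell out the (easy) verification that $(\A,L,0)$ is a bimodule and that $\id_\A\in\OO_\A(\A,L,0)$, which the paper asserts without comment.
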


\begin{proof}
Since $\id_\A$  is an $\OO$-operator of $\A$ associated to the bimodule $(\A,L,0)$, we have $\wt{\id_\A}=\sum_{i=1}^n \id_\A(e_i)\otimes \varepsilon_i=\sum_{i=1}^ne_i\otimes \varepsilon_i$.
Thus it follows from Theorem \ref{thm4.7} that $s=\sum_{i=1}^n (e_i\otimes \varepsilon_i+\varepsilon_i\otimes e_i)$ is a symmetric solution of the CYBE on the pre-Malcev algebra $\A\ltimes_{L^*,0}\A^*$.
\end{proof}

We close this subsection by establishing connections between invertible $\OO$-operators and bilinear forms on a given pre-Malcev algebra $\A$.

\begin{prop}\label{prop4.8}
Let $\A$ be a pre-Malcev algebra and $T: \A^*\ra \A$ be an invertible linear map. Suppose $\B:\A\times \A\ra\F$ is a
bilinear form defined by $\B(x, y)= \pair{T^{-1}(x),y}$. For all $x,y,z\in \A$, we have the following results:
\begin{enumerate}
  \item $T\in\OO_\A(\A^*,L^*-R^*,0)$ if and only if  $\B(x\cdot y,z)=-\B(y,x\cdot z-z\cdot x)$.
  \item $T\in\OO_\A(\A^*,L^*-R^*,-R^*)$ if and only if $\B(x\cdot y,z)=-\B(y,x\cdot z)+\B(y,z\cdot x)+\B(x,z\cdot y)$.
  \item $T\in\OO_{[\A]}(\A^*,L^*)$ if and only if  $\B(xy,z)=\B(x,y\cdot z)-\B(y,x\cdot z)$.
\end{enumerate}
\end{prop}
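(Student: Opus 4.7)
The plan is to handle all three parts with a single computational template that converts each $\OO$-operator condition, which lives in $\A$, into a scalar identity for $\B$ by applying $T^{-1}$ and pairing against an arbitrary $z \in \A$. The ingredients I will use are: the invertibility of $T$ (so every $x, y \in \A$ can be written uniquely as $x = T(\xi)$, $y = T(\eta)$ with $\pair{\xi, w} = \B(x, w)$ and $\pair{\eta, w} = \B(y, w)$ for all $w$); the defining relations $\pair{L^*_u(\alpha), w} = -\pair{\alpha, u\cdot w}$ and $\pair{R^*_u(\alpha), w} = -\pair{\alpha, w\cdot u}$ for the dual actions; the non-degeneracy of the natural pairing; and the $\OO$-operator identity \eqref{eq4.8} for pre-Malcev algebras (for Parts (1) and (2)) and the Malcev $\OO$-operator identity from Section \ref{sec2} (for Part (3)).

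I will start with Part (2), since Parts (1) and (3) are degenerations. With $(\ell,\re) = (L^*-R^*, -R^*)$, the $\OO$-operator identity applied to $\xi,\eta \in \A^*$ and composed on the left with $T^{-1}$ reads
\begin{equation*}
T^{-1}(x \cdot y) \;=\; L^*_x(\eta) - R^*_x(\eta) - R^*_y(\xi).
\end{equation*}
Pairing with $z$ turns the left side into $\B(x\cdot y, z)$, and the two basic relations turn the three terms on the right into $-\B(y, x\cdot z)$, $\B(y, z\cdot x)$, and $\B(x, z\cdot y)$ respectively, giving the claimed identity. Since $z$ is arbitrary and the pairing is non-degenerate, invertibility of $T$ makes each step reversible, so both directions follow simultaneously. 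For Part (1) the bimodule is $(L^*-R^*, 0)$, so the $R^*_y(\xi)$ term disappears and the remaining two terms combine as $-\B(y, x\cdot z - z\cdot x)$.

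For Part (3) the $\OO$-operator lives in the Malcev setting on $[\A]$, whose product is the commutator $xy = x\cdot y - y\cdot x$. The defining identity becomes $T(\xi)T(\eta) = T(L^*_{T(\xi)}(\eta) - L^*_{T(\eta)}(\xi))$, so applying $T^{-1}$ and pairing with $z$ yields $\B(xy,z) = -\pair{\eta, x\cdot z} + \pair{\xi, y\cdot z} = \B(x, y\cdot z) - \B(y, x\cdot z)$, and again invertibility of $T$ together with non-degeneracy of the pairing supplies the converse.

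There is no real obstacle: the content is carried by the definitions and the adjoint relations for $L^*, R^*$, and the proposition is essentially a bookkeeping exercise parallel to the one executed in the proof of Theorem \ref{thm2}. The only point demanding care is to avoid conflating the three different bimodules, and in particular to remember that Part (3) uses the Malcev $\OO$-operator convention on $[\A]$ (with the commutator) rather than the pre-Malcev one.
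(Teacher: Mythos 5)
Your proposal is correct and follows essentially the same route as the paper's proof: write $x=T(\xi)$, $y=T(\eta)$, apply $T^{-1}$ to the relevant $\OO$-operator identity, pair with an arbitrary $z$, and use the adjoint relations for $L^*,R^*$ together with non-degeneracy of the pairing to reverse the argument for the converse. The only difference is presentational (you treat Part (2) as the master case and derive (1) and (3) as specializations, while the paper runs the same computation three times), which does not change the substance.
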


\begin{proof} We first note that $T$ is invertible, thus for $x, y\in \A$, there exist unique $\xi,\eta\in \A^*$ such that $x=T(\xi)$ and $y=T(\eta)$.
(1) Assume that $T$ is an $\OO$-operator of $\A$ associated to the bimodule $(\A^*,L^*-R^*,0)$. We note that
\begin{eqnarray*}
\B(x\cdot y,z)+\B(y,x\cdot z-z\cdot x)
&=&\pair{T^{-1}(x\cdot y),z}+\pair{T^{-1}(y),x\cdot z-z\cdot x}\\
&=&\pair{T^{-1}(T(\xi)\cdot T(\eta)),z}+\pair{\eta,L_x(z)-R_x(z)}\\
&=&\pair{T^{-1}(T(\xi)\cdot T(\eta))-(L^*_{ x}-R^*_{ x})(\eta),z}\\
&=&\pair{T^{-1}(T(\xi)\cdot T(\eta)-T((L^*_{ x}-R^*_{ x})(\eta))),z}\\
&=&\pair{T^{-1}(T(\xi)\cdot T(\eta)-T((L^*_{ T(\xi)}-R^*_{ T(\xi)})(\eta))),z}=0
\end{eqnarray*}
Hence, $\B(x\cdot y,z)=-\B(y,x\cdot z-z\cdot x)$. Conversely, suppose that $\B(x\cdot y,z)=-\B(y,x\cdot z-z\cdot x)$. Then $0=\B(x\cdot y,z)+\B(y,x\cdot z-z\cdot x)=\pair{T^{-1}(T(\xi)\cdot T(\eta)-T((L^*_{ T(\xi)}-R^*_{ T(\xi)})(\eta))),z}.$
Since $z$ is arbitrary, we see that $T^{-1}(T(\xi)\cdot T(\eta)-T((L^*_{ T(\xi)}-R^*_{ T(\xi)})(\eta)))=0$.
Therefore, $T_r\in\OO_\A(\A^*,L^*-R^*,0)$.

(2) Suppose that $T\in\OO_\A(\A^*,L^*-R^*,-R^*)$, then $x\cdot y=T(\xi)\cdot T(\eta)=T((L^*_{ T(\xi)}-R^*_{ T(\xi)})(\eta)-R^*_{ T(\eta)}(\xi))$. We obtain that
\begin{eqnarray}\label{eq:4.16}
\B(x\cdot y,z)&=&\pair{T^{-1}(x\cdot y),z}=\pair{(L^*_{ T(\xi)}-R^*_{ T(\xi)})(\eta)- R^*_{ T(\eta)}(\xi),z}\nonumber\\
&=&\pair{(L^*_{ x}-R^*_{ x})(T^{-1}(y))-R^*_{ y}(T^{-1}(x)),z}\\
&=&-\pair{T^{-1}(y),(L_x-R_x)(z)}+\pair{T^{-1}(x),R_{y}(z)}\nonumber\\
&=&-\B(y,x\cdot z)+\B(y,z\cdot x)+\B(x,z\cdot y).\nonumber
\end{eqnarray}
Conversely, assume that $\B(x\cdot y,z)=-\B(y,x\cdot z)+\B(y,z\cdot x)+\B(x,z\cdot y)$. 
By a  discussion  similar to Eq. \eqref{eq:4.16}, we see that 
$T(\xi)\cdot T(\eta)=T((L^*_{ T(\xi)}-R^*_{ T(\xi)})(\eta)-R^*_{ T(\eta)}(\xi))$, that is, $T\in\OO_\A(\A^*,L^*-R^*,-R^*)$, as desired.

(3) Assume that $T\in\OO_{[\A]}(\A^*,L^*)$, then $T(\xi)T(\eta)=T(L^*_{ T(\xi)}\eta-L^*_{ T(\eta)}\xi)$.
Hence we have
\begin{eqnarray*}
&&\B(xy,z)-\B(x,y\cdot z)+\B(y,x\cdot z)\\
&=&\pair{T^{-1}(xy),z}-\pair{T^{-1}(x),y\cdot z}+\pair{T^{-1}(y),x\cdot z}\\
&=&\pair{T^{-1}(xy),z}-\pair{T^{-1}(x),L_{y}(z)}+\pair{T^{-1}(y),L_x(z)}\\
&=&\pair{T^{-1}(T(\xi)T(\eta)),z}+\pair{L^*_{T(\eta)}(\xi),z}-\pair{L^*_{T(\xi)}(\eta),z}\\
&=&\pair{T^{-1}(T(\xi)T(\eta)-T(L^*_{ T(\xi)}(\eta)-L^*_{ T(\eta)}(\xi))),z}=0.
\end{eqnarray*}
 For the  converse statement, we suppose that $\B(xy,z)=\B(x,y\cdot z)-\B(y,x\cdot z)$. Thus
$0=\B(xy,z)-\B(x,y\cdot z)+\B(y,x\cdot z)=\pair{T^{-1}(T(\xi)T(\eta)-T(L^*_{ T(\xi)}(\eta)-L^*_{ T(\eta)}(\xi))),z}$. For the non-degeneration of the natural pairing, we see that
$T(\xi)T(\eta)=T(L^*_{ T(\xi)}(\eta)-L^*_{ T(\eta)}(\xi))$, i.e., $T\in\OO_{[\A]}(\A^*,L^*)$.
\end{proof}

\begin{rem}{\rm
Note that besides Malcev algebras, there are some other nonassociative algebras that contain Lie algebras as a subclass and have attracted many researchers' attention; such as Hom-Lie algebras \cite{CZ23,MS08} and $\omega$-Lie algebras \cite{CZ17,CZZZ18, Zha20}. The method of our article might be applied to a study of the CYBE on these nonassociative algebras; see \cite{Sheng14} for the study on $\OO$-operators of Hom-Lie algebras and the classical Hom-Yang-Baxter equation.
\hbo}\end{rem}


\noindent
{\bf Acknowledgment.} This work is supported by
 National Natural Science Foundation of China (12271085).

\bibliography{}

\end{document}